\newcommand\circlearound[1]{\tikz[baseline]\node[draw,shape=circle,scale=0.6,anchor=base] {#1} ;}
\newcommand{\R}{{\mathbb{R}}}
\newcommand{\N}{{\mathbb{N}}}
\newcommand{\de}{{\mathrm{d}}}
\DeclareMathOperator{\dv}{div}
\newcommand{\lebesgue}{\mathcal{L}}
\newcommand{\hd}{\mathcal{H}}
\newcommand{\hdone}{\mathcal{H}^1}
\newcommand{\rca}{\mathrm{rca}}
\newcommand{\fbm}{{\mathrm{fbm}}}
\newcommand{\restr}{{\mbox{\LARGE$\llcorner$}}}
\newcommand{\spt}{{\mathrm{spt}}}
\newcommand{\weakstarto}{\stackrel{*}{\rightharpoonup}}
\newcommand{\Wd}[1]{\mathrm{W}_{#1}}
\newcommand{\Wdone}{\Wd{1}}
\newcommand{\BV}{{\mathrm{BV}}}
\newcommand{\SBV}{{\mathrm{SBV}}}
\newcommand{\cont}{{\mathcal{C}}}
\newcommand{\smooth}{\mathcal{C}_{c}^{\infty}}
\newcommand{\flux}{{\mathcal{F}}}
\newcommand{\J}{{\mathcal{J}}}
\newcommand{\brTptEn}{{\mathcal{M}}}
\newcommand{\urbPlEn}{{\mathcal{E}}}
\newcommand{\XiaEn}{M}
\newcommand{\urbPlXia}{E}
\newcommand{\brTptImg}{\tilde M}
\newcommand{\urbPlImg}{\tilde E}
\newcommand{\Kcal}{\mathcal{K}}
\newcommand{\keywords}[1]{\noindent\textbf{Keywords:}\enspace#1}
\newcommand{\subjclass}[1]{\bigskip\noindent\emph{2010 MSC:}\enspace#1}
\numberwithin{equation}{subsection}
\theoremstyle{plain}
\newtheorem{theorem}{Theorem}[subsection]
\newtheorem{lemma}[theorem]{Lemma}
\theoremstyle{definition}
\newtheorem{definition}[theorem]{Definition}
\theoremstyle{remark}
\newtheorem{remark}[theorem]{Remark}
\newcommand{\notinclude}[1]{}
\begin{document}

\title{Optimal micropatterns in 2D transport networks and their relation to image inpainting}
\author{Alessio Brancolini\footnote{Institute for Numerical and Applied Mathematics, University of M\"unster, Einsteinstra\ss{}e 62, D-48149 M\"unster, Germany}\ \footnote{Email address: \texttt{alessio.brancolini@uni-muenster.de} (corresponding author)} \and Carolin Rossmanith\footnotemark[1]\ \footnote{Email address: \texttt{carolin.rossmanith@uni-muenster.de}} \and Benedikt Wirth\footnotemark[1]\ \footnote{Email address: \texttt{benedikt.wirth@uni-muenster.de}}}
\date{}
\maketitle

\begin{abstract}
We consider two different variational models of transport networks, the so-called branched transport problem and the urban planning problem.
Based on a novel relation to Mumford--Shah image inpainting and techniques developed in that field,
we show for a two-dimensional situation that both highly non-convex network optimization tasks can be transformed into a convex variational problem,
which may be very useful from analytical and numerical perspectives.

As applications of the convex formulation, we use it to perform numerical simulations (to our knowledge this is the first numerical treatment of urban planning),
and we prove the lower bound of an energy scaling law
which helps better understand optimal networks and their minimal energies.

\bigskip\keywords{micropatterns, image inpainting, energy scaling laws, optimal transport, optimal networks, branched transport, irrigation, urban planning, Wasserstein distance, convex lifting, convex optimization}

\subjclass{49Q20, 49Q10, 90B10,49M29}
\end{abstract}

\section{Introduction}
The optimization of transport networks, in particular the so-called urban planning \cite{BuPrSoSt09} or the branched transport problem \cite{BeCaMo09}, is a nontrivial task.
The corresponding cost functionals are highly non-convex, and their minimizers, the optimal networks, often exhibit a complicated, strongly ramified structure.

\subsection{Convexification, numerical optimization, and energy scaling laws}
A better understanding can either be achieved by numerical or by analytical means.

Via numerical optimization one may for instance identify and explore the optimal network structures for different parameters or geometric settings.
This then provides an intuition of the model behaviour.
While for urban planning we are not aware of any numerical treatment, optimal networks in branched transport have already been computed using a phase field approximation \cite{OuSa11}.
However, due to the complicated energy landscape it is highly nontrivial not to get stuck in local minima, and in general there is no guarantee to achieve the global optimum (which is known to exist).

A classical alternative, analytical way to achieve a better understanding of the network models is by proving how the optimal cost scales in the problem parameters.
To this end one typically restricts to a specific, simple situation and constructs a cost efficient network for that situation.
If one can prove that, up to a constant factor, no network can attain a lower cost, then this provides information about the minimum achievable cost and the structure of near-optimal networks.
In particular it can be learned how the minimum cost scales in the problem parameters.

Such energy scaling laws have already successfully been used to explain a number of complicated patterns seen in physical experiments
such as martensite--austenite transformations \cite{KM92,KoMu94,KnKoOt13,ChCo14,BeGo14},
micromagnetics \cite{ChKoOt99}, intermediate states in type-I superconductors \cite{ChKoOt04,ChCoKo08},
membrane folding and blistering \cite{BeKo14,BeKo15}, or epitaxial growth \cite{GoZw14}.
They were also employed to find the optimal fine-scale structure of composite elastic materials \cite{KoWi14,KoWi14b}
as well as of branched transport and urban planning networks \cite{BrWi15-micropatterns}.

In this article we perform both numerical optimization as well as energy scaling analysis.
In particular, we compute optimal urban planning networks numerically for the first time,
and we redo the energy scaling analysis of branched transport and urban planning networks.
To these ends we follow a novel approach which is interesting on its own.
For a two-dimensional situation we show that the problem can actually be convexified.
This is achieved by exploiting a novel relation to image inpainting and techniques developed in that field.

We believe that the convex formulation is in many situations tight, that is, equivalent to the original network optimization,
however, we only prove that it represents a lower bound.
Exploring when it is also an upper bound will be more complicated and certainly requires quite technical constructions (similar to those involved in the regularity analysis of the Mumford--Shah functional; we refer to Remark \ref{rem:upper_bound} and the final discussion in Section \ref{sec:discussion}).
We will only briefly discuss the associated analytical problems and instead showcase numerical results that support the tightness of the convexification except for a few special cases.

Convex formulations have the great advantage that the local behaviour of the cost or energy provides global information of the full energy landscape,
which can be used in many ways (for instance numerically to find the global energy minimizer or analytically to find lower energy bounds by convex duality).
As applications we perform numerical network optimization, and we reprove the tight lower bound on the minimum achievable network cost, which is the essential ingredient of the energy scaling law.

\subsection{Branched transport and urban planning}
Branched transport has been introduced in different formulations by Maddalena, Morel, and Solimini \cite{Maddalena-Morel-Solimini-Irrigation-Patterns} and by Xia \cite{Xia-Optimal-Paths} and then studied by various authors; a comprehensive treatment is found in the monograph \cite{BeCaMo09}, while regularity issues are considered in \cite{Morel-Santambrogio-Regularity}, \cite{Santambrogio-Landscape}, \cite{Brancolini-Solimini-Hoelder}, and \cite{Brancolini-Solimini-Fractal}.
It has been used to describe hierarchically branched networks such as the blood vessel system or the water supply network in plants.
The efficiency of a network is judged by a functional, which measures the cost associated with the transport of mass from a given source distribution to a sink distribution.
This functional has the feature that the transport cost per transport distance is not proportional to the transported mass, but increases sublinearly in the mass.
In other words, the more mass is transported together, the lower is cost per single particle, which models an increase in transport efficiency.
This feature leads to ramified networks, since it becomes more efficient to first collect all the mass via a hierarchically branched network and then transport it all together in bulk.
The strength of this effect and thus the degree of ramification is governed by a model parameter $\varepsilon>0$.
For small $\varepsilon$, the effect is only weak, which leads to highly complicated structures with many pipes becoming optimal.

The urban planning model was introduced in \cite{Brancolini-Buttazzo} and is motivated by public transport networks.
Here, the given source represents the distribution of homes in an urban area, and the sink represents the distribution of workplaces.
Every commuter can travel on the network, paying a fixed cost of $1$ per travel distance, or outside the network by own means, paying a slightly larger cost $a$.
In contrast to branched transport, the resulting total transport cost is proportional to the transported mass.
In addition, the urban planning model has maintenance costs which are proportional to the total network length with proportionality factor $\varepsilon$.
Those maintenance costs again lead to the preference of hierarchically branched networks.

\subsection{The considered setting}
To better understand the model behaviour, an energy scaling law for the network costs has been derived in \cite{BrWi15-micropatterns} (and will be reproved in this work, see Theorem\,\ref{thm:scalingUrbPlan}) for the following simple problem geometry (Figure\,\ref{fig:setting} left).
In two-dimensional Euclidean space one considers as source and sink distribution the following measures concentrated on two lines,
\begin{equation*}
\mu_0=\lebesgue^1\restr[0,\ell]\times\{0\}\,,
\qquad
\mu_1=\lebesgue^1\restr[0,\ell]\times\{1\}\,,
\end{equation*}
where $\ell>0$, $\lebesgue^1$ denotes the one-dimensional Lebesgue measure and $\restr$ indicates its restriction onto a subset of $\R^2$.
Note that the case of measures with a larger or smaller distance between their supports or with a different common total mass can easily be reduced to the above situation
and that \cite{BrWi15-micropatterns} actually also considers the analogous problem in more than two dimensions.
One seeks the optimal network (with respect to the urban planning or branched transport costs) to transport the mass from $\mu_0$ to $\mu_1$.
This situation may for instance be viewed as a strong simplification of the water transport within a single plant or a forest:
$\mu_0$ represents the groundwater reservoir and $\mu_1$ the water consumption in the leaf canopy (Figure\,\ref{fig:setting} right).

\begin{figure}
\setlength{\unitlength}{\linewidth}%
\begin{picture}(1,.5)
\put (.03,.07) { \includegraphics[width=0.64\textwidth]{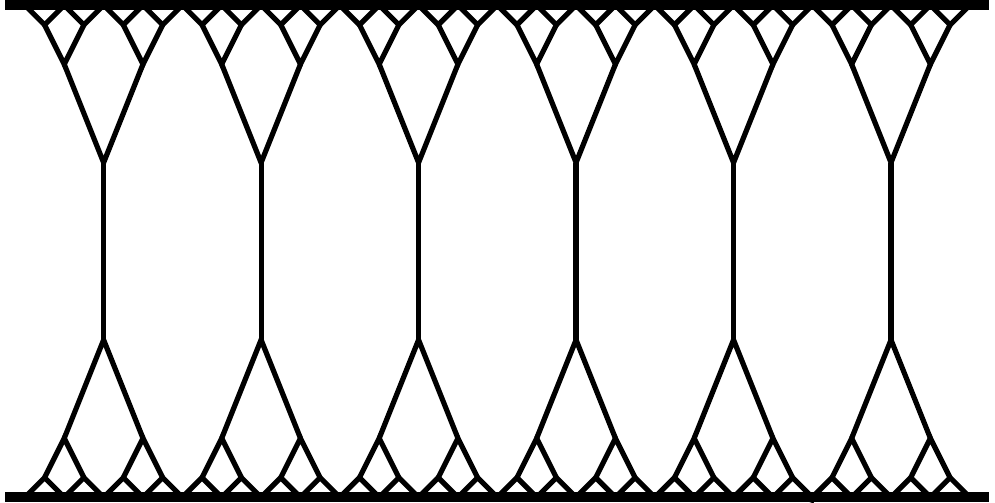} }
\put (.05,.03){\vector(1,0){.625}}
\put(.05,.03){\vector(-1,0){.01}}
\put(.02,.07){\vector(0,1){.32}}
\put(.02,.08){\vector(0,-1){.01}}
\put(.35,.05){$\mu_0$}
\put(.35,.402){$\mu_1$}
\put(.35,0){$\ell$}
\put(0,.22){$1$}
\multiput(.265,.11)(.022,0){4}{\line(1,0){.016}}
\multiput(.265,.175)(.022,0){4}{\line(1,0){.016}}
\multiput(.265,.11)(0,.025){3}{\line(0,1){.015}}
\multiput(.346,.11)(0,.025){3}{\line(0,1){.015}}
\put(.74,.1){\includegraphics[height=.26\unitlength,angle=-90,origin=c]{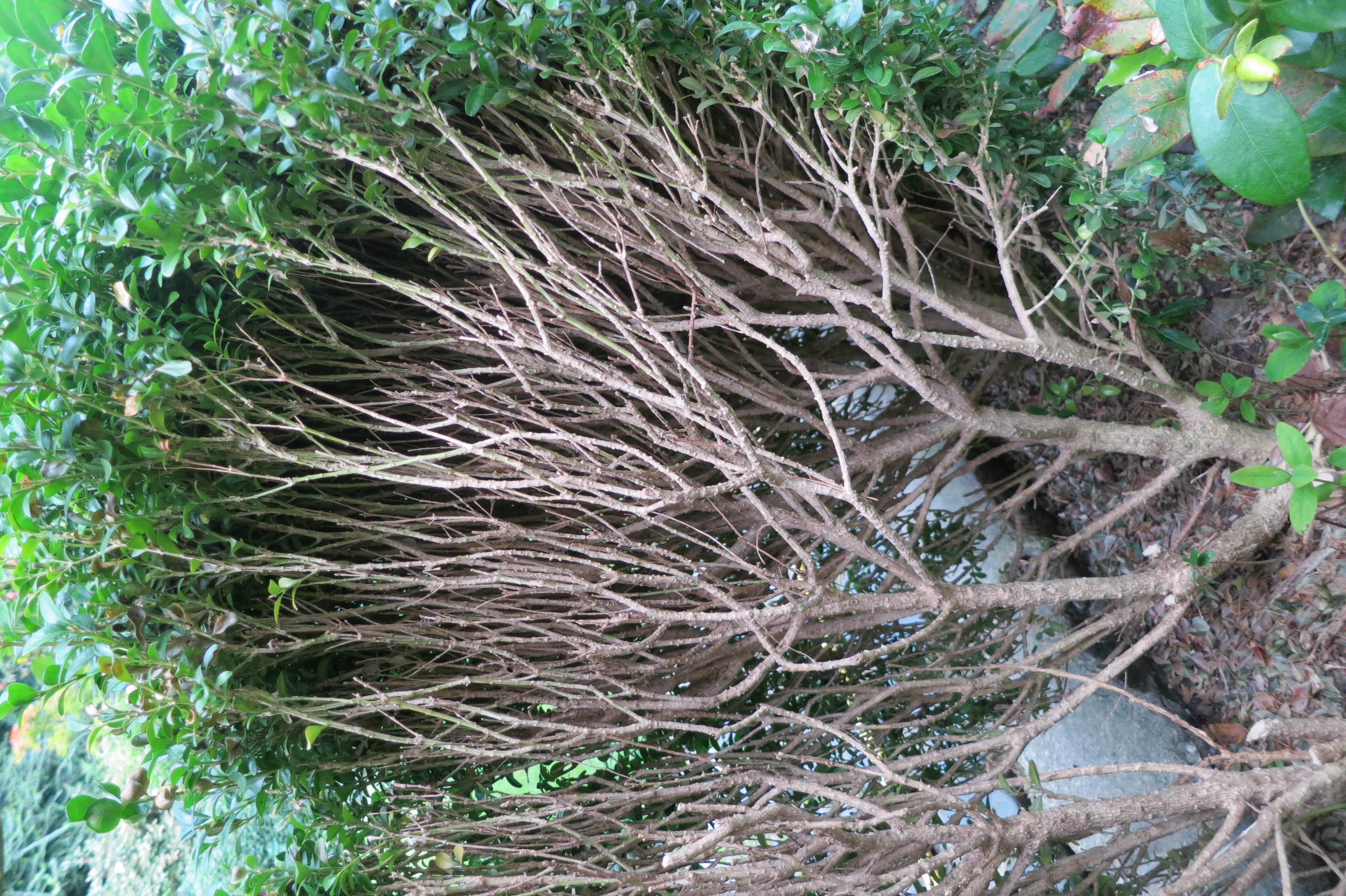}}
\end{picture}
\caption{Left: Sketch of the considered setting, two measures $\mu_0$ and $\mu_1$ supported on lines at distance $1$, as well as an exemplary transport network in between composed of elementary cells (dashed line).
Right: Photograph of a box tree exhibiting a branched network structure and a comparatively thin leaf canopy that may almost be thought of as a lower-dimensional manifold.}
\label{fig:setting}
\end{figure}

The truly optimal transport network is too hard to find, but the previously described energy scaling analysis allows to examine how near-optimal network patterns look like.
A qualitative sketch is given in Figure\,\ref{fig:setting} left:
such a network is composed of multiple hierarchical levels.
Each level consists of an array of V-shaped elementary cells,
with heights and aspect ratios depending on the level and on the problem parameters $a$ and $\varepsilon$, the details are given in \cite{BrWi15-micropatterns}.

For numerical network optimization we will also consider different geometric settings such as sources and sinks distributed on a circle (see Figure\,\ref{fig:ResultsCircle2}).

\subsection{The connection to the Mumford--Shah problem}
The Mumford--Shah problem is a variational formulation of an image segmentation or denoising task, originally introduced in \cite{MuSh89}.
The corresponding energy functional is designed to smooth out a given noisy image while preserving the edges or discontinuities visible inside the image.
The work was very influential, and many techniques have been devised to deal with the problem numerically and analytically.
Among these techniques there are so-called functional lifting methods that find a convex formulation of the highly complicated and non-convex Mumford--Shah segmentation
at the expense of introducing an additional dimension to the problem \cite{PCBC-2009}.

The essential observation in the context of network optimization now is that a two-dimensional, divergence-free mass flux (the flux running through the network) becomes a gradient field after rotation by $\frac\pi2$.
This gradient field can be interpreted as the gradient of an image, which suddenly allows to reformulate network optimization as a Mumford--Shah-type image processing problem and thus to use the functional lifting techniques.

The outline of the article is as follows.
We will introduce urban planning and branched transport in Section\,\ref{sec:FluxFormulation} as well as the energy scaling result, which is the main analytical application of our new convex formulation.
The relation between network optimization and image processing is derived in Section\,\ref{sec:MumfordShah}, as is the convex reformulation.
Section\,\ref{sec:MumfordShah} further contains the applications, the lower bound of the energy scaling as well as a numerical approach based on the reformulation.
We close with a discussion in Section\,\ref{sec:discussion}.

\section{Transport networks and their energy scaling}\label{sec:FluxFormulation}

We consider two different models for transport networks, so-called branched transport and urban planning.
The formulation in which we state both models is originally due to Xia \cite{Xia-Optimal-Paths};
the fact that also urban planning can be formulated this way has been shown in \cite{BrWi15-equivalent}.

In the following, $\hd^n$ and $\lebesgue^n$ denote the $n$-dimensional Hausdorff and Lebesgue measure,
$\fbm(\Omega)$ is the set of non-negative finite Borel measures on $\Omega\subset\R^n$,
and $\rca(\Omega;\R^n)$ denotes the set of vector-valued regular countably additive measures (Radon measures).
Both $\fbm(\Omega)$ and $\rca(\Omega;\R^n)$ are equipped with the notion of weak-$*$ convergence, denoted by $\weakstarto$.
Finally, the Dirac delta distribution at a point $x\in\R^n$ will be denoted by $\delta_x$.

\subsection{Flux-based formulations of urban planning and branched transport}
\begin{definition}[Discrete mass flux]\label{def:discrete_mass_flux}
A \emph{transport path} is a weighted directed graph $G$ in $\R^n$ with vertices $V(G)$, straight edges $E(G)$, and weight function $w : E(G) \to [0,\infty)$.

Denote the initial and final point of an edge $e\in E(G)$ by $e^+$ and $e^-$ so that the edge has direction $\hat e=\frac{e^--e^+}{|e^--e^+|}$.
The edge can be identified with the vector measure $\mu_e = (\hdone\restr e)\, \hat e$.
The \emph{discrete mass flux} corresponding to a transport path $G$ is the vector measure
\begin{equation*}\label{eqn:graphFlux}
 \flux_G = \sum_{e \in E(G)} w(e)\mu_e\,.
\end{equation*}

Let $\mu_+ = \sum_{i = 1}^k a_i\delta_{x_i}$, $\mu_- = \sum_{i = 1}^l b_i\delta_{y_i}$ be discrete finite non-negative measures with $a_i, b_i > 0$, $x_i, y_i \in \R^n$ and equal mass $\|\mu_+\|_\fbm=\|\mu_-\|_\fbm$.
$G$ is called a \emph{transport path between $\mu_+$ and $\mu_-$}, or equivalently $\flux_G$ is called a \emph{discrete mass flux between $\mu_+$ and $\mu_-$}, if $\dv\flux_G=\mu_+-\mu_-$ in the distributional sense.
\end{definition}

The measure $\mu_+$ and $\mu_-$ can be thought of as a mass source and sink, respectively,
and the transport path $G$ or equivalently the discrete mass flux $\flux_G$ describe how the mass is transported from the source to the sink.
Here, the edge weight $w(e)$ indicates how much mass is flowing along edge $e$,
and the divergence condition $\dv\flux_G=\mu_+-\mu_-$ ensures that no mass gets lost on the way from $\mu_+$ to $\mu_-$.

\begin{definition}[Urban planning and branched transport cost function]\label{def:mass_flux_cost_function_branched_transport_discrete_case}
Let $a>1$, $\varepsilon>0$.
The \emph{urban planning cost} of a transport path $G$ or discrete mass flux $\flux_G$ is given by
\begin{equation*}
\urbPlXia^{\varepsilon,a}(\flux_G)
=\urbPlXia^{\varepsilon,a}(G)
=\sum_{e \in E(G)} \min(aw(e),w(e)+\varepsilon)\, l(e)\,,
\end{equation*}
where $l(e)$ denotes the length of edge $e$.
The \emph{branched transport cost} for $\varepsilon\in(0,1)$ is given by
\begin{equation*}
\XiaEn^{\varepsilon}(\flux_G)
=\XiaEn^{\varepsilon}(G)
=\sum_{e \in E(G)} w(e)^{1-\varepsilon}\, l(e)\,.
\end{equation*}
\end{definition}

The functionals $\urbPlXia^{\varepsilon,a}$ and $\XiaEn^\varepsilon$ model the cost associated with the mass transport encoded by $G$.
The cost contribution of each edge is proportional to the transport distance $l(e)$.
The transport cost per particle and distance is $\min(a,1+\frac\varepsilon{w(e)})$ for urban planning and $w(e)^{-\varepsilon}$ for branched transport.
Obviously, the cost per particle decreases the larger the total flux $w(e)$ through the edge, that is, the more particles travel together.
These economies of scale lead to the fact that branched networks are more cost-efficient than non-branched ones.
Indeed, it pays off to first collect mass together, then transport it inexpensively in bulk, and only later again divert the single particles to their final destinations.
The parameters $\varepsilon$ and $a$ encode the strength of these economies of scale.

As already mentioned before, in urban planning the cost also has an alternative interpretation \cite{BrWi15-micropatterns}:
it can be viewed as the sum of the maintenance costs for the transport network (given by $\varepsilon$ times the total network length)
and the transport costs, which split up into a contribution from mass transport via the network (at cost 1 per particle and distance)
as well as a contribution from mass transport outside the network (at a slightly larger cost $a$ per particle and distance).

\begin{definition}[Continuous mass flux and cost function]
Let $\mu_+,\mu_- \in \fbm(\R^n)$ of equal mass.
A vector measure $\flux\in\rca(\R^n;\R^n)$ is a \emph{mass flux between $\mu_+$ and $\mu_-$},
if there exist sequences of discrete measures $\mu_+^k$, $\mu_-^k$ with $\mu_+^k \weakstarto \mu_+$, $\mu_-^k \weakstarto \mu_-$ as $k\to\infty$,
and a sequence of discrete mass fluxes $\flux_{G_k}$ between $\mu_+^k$ and $\mu_-^k$ with $\flux_{G_k} \weakstarto \flux$.
Note that $\dv\flux=\mu_+-\mu_-$ follows by continuity with respect to weak-$*$ convergence.

A sequence $(\mu_+^k,\mu_-^k,\flux_{G_k})$ satisfying the previous properties is called \emph{approximating graph sequence}, and we write $(\mu_+^k,\mu_-^k,\flux_{G_k}) \weakstarto (\mu_+,\mu_-,\flux)$.

If $\flux$ is a mass flux between $\mu_+$ and $\mu_-$, the \emph{urban planning} and \emph{branched transport cost} are respectively defined as
\begin{align*}\label{eq:functional_XiaEn}
 \urbPlXia^{\varepsilon,a}(\flux) &= \inf\left\{\liminf_{k \to \infty} \urbPlXia^{\varepsilon,a}(G_k) \ : \ (\mu_+^k,\mu_-^k,\flux_{G_k}) \weakstarto (\mu_+,\mu_-,\flux)\right\}\,,\\
 \XiaEn^\varepsilon(\flux) &= \inf\left\{\liminf_{k \to \infty} \XiaEn^\varepsilon(G_k) \ : \ (\mu_+^k,\mu_-^k,\flux_{G_k}) \weakstarto (\mu_+,\mu_-,\flux)\right\}\,.
\end{align*}
\end{definition}

As before, $\flux\in\rca(\R^n;\R^n)$ describes the mass transport from a source $\mu_+$ to a sink $\mu_-$.
One is interested in the optimally cost-efficient mass transport, that is, for fixed $\mu_+$ and $\mu_-$ one seeks the mass flux between $\mu_+$ and $\mu_-$ with least cost $\urbPlXia^{\varepsilon,a}$ or $\XiaEn^{\varepsilon}$.

\begin{remark}[Upper bound on urban planning]\label{rem:urban_planning_bounded_by_a_times_wasserstein_one}
For later reference, note that in urban planning the cost minimizing mass flux $\flux$ between $\mu_+$ and $\mu_-$ satisfies $$\urbPlXia^{\varepsilon,a}(\flux) \leq a \Wdone(\mu_+,\mu_-)\,,$$ where\hfill$\displaystyle\Wdone(\mu_+,\mu_-) = \min_{\mu\in\Pi(\mu_+,\mu_-)} \int_{\R^n\times\R^n} |x - y|\, \de\mu(x,y)$\hfill\ \\[.6\baselineskip]
with $\Pi(\mu_+,\mu_-) = \{\mu\in\fbm(\R^n\times\R^n) \ : \ \mu(A \times \R^n) = \mu_+(A),\ \mu(\R^n \times B) = \mu_-(B)\text{ for $A,B$ Borel }\subset\R^n\}$ denotes the Wasserstein distance.

Indeed, if $\mu_+,\mu_-$ are discrete measures, let $\pi = \sum_{{x \in \spt\mu_+, y \in \spt\mu_-}} c_{x,y}\delta_{(x,y)}\in\Pi(\mu_+,\mu_-)$ be the optimal transport plan for the Wasserstein distance between $\mu_+,\mu_-$.
Now introduce the transport path with vertices $V(G) = \spt\mu_+\cup\spt\mu_-$, edges $E(G) = \left\{e_{x,y} \ : \ x\in\spt\mu_+, \ y\in\spt\mu_-\right\}$, and weights $w(e_{x,y})=c_{x,y}$, where $e_{x,y}$ be 
the oriented line segment from $x$ to $y$. Obviously,
\begin{multline*}
 \urbPlXia^{\varepsilon,a}(\flux) \leq \urbPlXia^{\varepsilon,a}(G) = \sum_{e \in E(G)} \min\{aw(e),w(e)+\varepsilon\}l(e) \leq \sum_{e \in E(G)} aw(e)l(e)\\
 = a\sum_{{x \in \spt\mu_+ , y \in \spt\mu_-}} c_{x,y}|x-y| = a \Wdone(\mu_+,\mu_-)\,.
\end{multline*}
For non-discrete measures $\mu_+,\mu_-$, let $(G_n,\mu_+^n,\mu_-^n) \weakstarto (\flux,\mu_+,\mu_-)$ be an approximating graph sequence. We then finally have
\begin{displaymath}
 \urbPlXia^{\varepsilon,a}(\flux) \leq \liminf_{n \to \infty} \urbPlXia^{\varepsilon,a}(\flux_{G_n}) \leq \liminf_{n \to \infty} a\Wdone(\mu_+^n,\mu_-^n) = a\Wdone(\mu_+,\mu_-)\,.
\end{displaymath}
\end{remark}

\subsection{Energy scaling for a simple, two-dimensional situation}
Given an initial and final measure $\mu_+$ and $\mu_-$, we look for the optimal mass fluxes between $\mu_+$ and $\mu_-$, that is, for minimizers of
\begin{align*}
\urbPlEn^{\varepsilon,a,\mu_+,\mu_-}[\flux]&=\begin{cases}\urbPlXia^{\varepsilon,a}(\flux)&\text{if }\flux\text{ is a mass flux between $\mu_+$ and $\mu_-$},\\\infty&\text{else,}\end{cases}\\
\brTptEn^{\varepsilon,\mu_+,\mu_-}[\flux]&=\begin{cases}\XiaEn^{\varepsilon}(\flux)&\text{if }\flux\text{ is a mass flux between $\mu_+$ and $\mu_-$},\\\infty&\text{else.}\end{cases}
\end{align*}
To derive an energy scaling law we here consider the geometrically simple, two-dimensional ($n=2$) situation with source and sink
\begin{equation}\label{eqn:sourceSink}
\mu_+:=\mu_0=\lebesgue^1\restr[0,\ell]\times\{0\}\,,
\qquad
\mu_-:=\mu_1=\lebesgue^1\restr[0,\ell]\times\{1\}
\end{equation}
(cf.\ Figure\,\ref{fig:setting} left).
It is not difficult to see that the minimum urban planning and branched transport cost for $\varepsilon=0$ are given by
\begin{align*}
\urbPlEn^{*,a,\mu_0,\mu_1}&:=\min_{\flux}\urbPlEn^{0,a,\mu_0,\mu_1}[\flux]=\Wdone(\mu_0,\mu_1)=\ell\,,\qquad\\
\brTptEn^{*,\mu_0,\mu_1}&:=\min_{\flux}\brTptEn^{0,\mu_0,\mu_1}[\flux]\,=\Wdone(\mu_0,\mu_1)=\ell\,.
\end{align*}
Indeed, both urban planning and branched transport cost are known to be bounded below by the Wasserstein distance (see \cite[Lemma 4.2]{Xia-Optimal-Paths} for the case of branched transport;
for the case of urban planning the equivalent formulation of \cite{BrWi15-equivalent} allows the same proof),
and a minimizing flux achieving the above value is given by $\flux=\genfrac(){0pt}{}{0}{1}\lebesgue^2\restr[0,\ell]\times[0,1]$.
Our aim is to prove via a novel relation to image inpainting that urban planning and branched transport satisfy the following energy scaling law.

\begin{theorem}[Energy scaling]\label{thm:scalingUrbPlan}
There are constants $C_1,C_2,\varepsilon_0>0$ independent of $\varepsilon,a,\ell$
such that for $\varepsilon<\min(\varepsilon_0,\ell^3)$ and $a>1$ we have
\begin{equation*}
\begin{array}{rcl}
C_1\ell\min\{a-1,\varepsilon^{\frac23}\}
\;\leq&\min_\flux\urbPlEn^{\varepsilon,a,\mu_0,\mu_1}[\flux]-\urbPlEn^{*,a,\mu_0,\mu_1}
&\leq\; C_2\ell\min\{a-1,\varepsilon^{\frac23}\}\,,\\[1ex]
C_1\ell\varepsilon|\log\varepsilon|
\;\leq&\min_\flux\brTptEn^{\varepsilon,\mu_0,\mu_1}[\flux]-\brTptEn^{*,\mu_0,\mu_1}
&\leq\; C_2\ell\varepsilon|\log\varepsilon|\,.
\end{array}
\end{equation*}
\end{theorem}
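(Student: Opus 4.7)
The plan is to establish upper and lower bounds separately, the lower bound being the main novel contribution that exploits the Mumford--Shah convex reformulation introduced in Section~\ref{sec:MumfordShah}. Since the theorem is a two-sided scaling law, matching constants matter only up to factors, which considerably relaxes the technical demands on each side.

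For the upper bound, I would construct an explicit near-optimal hierarchical network in the strip $[0,\ell]\times[0,1]$. The construction decomposes the strip into horizontal slabs corresponding to hierarchical levels, each slab tiled by an array of V-shaped elementary cells that merge pipes coming from the level below. With $K$ levels, the widths of pipes roughly double from one level to the next, and the slab heights and cell aspect ratios are chosen to balance the contributions of mass transport and of network length or maintenance. Optimizing these free parameters yields excess cost $\mathcal{O}(\ell\varepsilon^{2/3})$ for urban planning and, with $K\sim|\log\varepsilon|$ geometrically nested levels, $\mathcal{O}(\ell\varepsilon|\log\varepsilon|)$ for branched transport. The regime $a-1\leq\varepsilon^{2/3}$ in urban planning is handled trivially by Remark~\ref{rem:urban_planning_bounded_by_a_times_wasserstein_one}, which gives the bound $\urbPlXia^{\varepsilon,a}(\flux)\leq a\Wdone(\mu_0,\mu_1)=a\ell$ using the bare transport plan without any network, so that the excess cost is at most $(a-1)\ell$.

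For the lower bound, the crucial observation is that in two dimensions any divergence-free flux $\flux$ rotated by $\pi/2$ becomes curl-free and is therefore a gradient $\nabla u$ of some scalar potential $u$. The divergence condition $\dv\flux=\mu_0-\mu_1$ translates into Dirichlet-type data for $u$ on the horizontal lines $y=0$ and $y=1$: $u$ must increase by a prescribed amount as one traces along these boundaries. The urban planning or branched transport cost becomes a Mumford--Shah-type functional in $u$, where the fixed-cost-per-edge contribution corresponds to a penalty on the jump set and the sublinear mass-transport contribution corresponds to a convex penalty on $\nabla u$ in the smooth part. Applying functional lifting then passes to the characteristic function of the subgraph of $u$ in an additional dimension, producing a convex lower bound for the original, highly non-convex network cost.

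The hard part is deriving the sharp quantitative lower bound from the lifted convex problem. The approach I would take is to construct a dual certificate (a calibration) on $\Omega\times\mathbb{R}$ pointwise bounded by the convex integrand and whose pairing with any admissible lifted configuration realizes the claimed scaling. Building such a calibration is technically delicate because it must reflect the expected multi-scale hierarchical structure of near-optimal networks; in particular, for branched transport it must encode a logarithmic range of scales to produce the $|\log\varepsilon|$ factor, while for urban planning it must be finely tuned on a single dominant scale to match the $\varepsilon^{2/3}$ exponent. Once the calibration is in place, convex duality applied to any admissible flux yields the required lower bounds $C_1\ell\min\{a-1,\varepsilon^{2/3}\}$ and $C_1\ell\varepsilon|\log\varepsilon|$; the role of the restriction $\varepsilon<\min(\varepsilon_0,\ell^3)$ is to ensure that the optimal hierarchy fits within the strip and that the calibration construction does not degenerate.
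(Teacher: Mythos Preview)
Your overall strategy matches the paper's: the upper bound is obtained from an explicit hierarchical construction (as in \cite{BrWi15-micropatterns}), and the lower bound follows from the Mumford--Shah reformulation and convex duality via a dual certificate in the lifted problem.

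The one point where your expectation diverges from the paper is the structure of the calibration. You anticipate that the dual certificate ``must reflect the expected multi-scale hierarchical structure of near-optimal networks'' and ``encode a logarithmic range of scales'' to recover the $|\log\varepsilon|$ factor. In fact the paper's construction is strikingly simple and single-scale: one takes a unit circular flow $\theta$, stretches it horizontally by a single factor $\beta_i$, and sets $\phi^i_x(x,s)=\varphi_i(x_1-s,x_2)$. The whole bound reduces to verifying that this stretched ellipse satisfies the integral constraint $\left|\int_{s_1}^{s_2}\phi^i_x\,\de s\right|\leq\psi(|s_2-s_1|)$ uniformly in $x$, which amounts to a one-variable estimate. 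The constants $\beta_1=\min(1+c\varepsilon^{2/3},a)$ and $\beta_2=1+\tfrac12\varepsilon|\log\varepsilon|$ fall out of this verification directly; no hierarchy enters the lower-bound argument at all. So the calibration is considerably less delicate than you suggest, and this simplicity is arguably the main point of the convex-duality route compared to the pattern-analysis proof in \cite{BrWi15-micropatterns}. The restriction $\varepsilon<\ell^3$ is needed only for the upper-bound construction, not for the calibration.
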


The proof of the above result has already been performed in \cite{BrWi15-micropatterns}.
In particular, the upper bound was proven by providing a construction $\flux$ with the desired energy scaling.
The lower bound was obtained by a technique common in pattern analysis.
One aim of this article is to establish the lower bound via a completely different route,
by finding a convex relaxation of the problem and proving a lower bound via convex duality.
The essential ingredient here is the identification of a relation between transport network optimization and image inpainting.
The remainder of the article derives this relation and provides the lower bound proof as well as some numerical simulations as applications.

\section{Optimal networks in 2D via convex optimization}\label{sec:MumfordShah}
In this section we will transform the urban planning and the branched transport problem in two dimensions
to variants of the Mumford--Shah segmentation problem from image processing.
Since for the Mumford--Shah segmentation problem there are well-known convex reformulations via so-called functional lifting,
the urban planning and branched transport problem in two dimensions each come with a convex formulation as well.
While we show that the convex problems provide a lower bound to the original network optimization, we do not examine when they are equivalent.
This would require to prove a limsup inequality which probably involves a difficult technical construction outside the scope of this article (cf. Remark \ref{rem:upper_bound} and Section \ref{sec:discussion}).
We will use the convex formulations to put forward a proof of the lower bound in Theorem\,\ref{thm:scalingUrbPlan} which is solely based on convex duality
as well as to perform numerical network optimization that cannot get stuck in local minima.

\subsection{Bijection between fluxes and images}
Denoting by $B_r(A)$ the open $r$-neighbourhood of a set $A\subset\R^2$,
let $\Omega\subset\R^2$ and $V\subset\R^2$ be open bounded convex domains with $\Omega\subset\subset V$; for simplicity we shall assume
\begin{equation*}
V\supset B_1(\Omega)\,.
\end{equation*}
We will later only consider the case where the initial and final measure of the urban planning or branched transport problem are concentrated on the boundary $\partial\Omega$.
For the specific choice \eqref{eqn:sourceSink} we may for instance take $\Omega=(0,\ell)\times(0,1)$ and $V=B_1(\Omega)$.
Now let $u\in\BV(V)$ represent a grey value image, where $\BV(V)$ denotes the space of scalar functions of bounded variation on $V$.
The gradient of $u$ can be decomposed into a part continuous with respect to the two-dimensional Lebesgue measure $\lebesgue^2$,
a jump part concentrated on the discontinuity set $S_u$ of $u$, and a Cantor part $D_c u$ \cite[(3.89) and (3.90)]{AmFuPa00},
\begin{equation*}
 Du=\nabla u\lebesgue^2\restr V+[u]\nu\hdone\restr S_u + D_c u\,.
\end{equation*}
Here $\nabla u$ is the Radon--Nikodym derivative of the Lebesgue-continuous part with respect to $\lebesgue^2$,
$\nu$ is the unit normal to the (rectifiable) discontinuity set $S_u$, and $[u]$ is the jump in function values across $S_u$ in direction $\nu$.

\begin{definition}[Flux corresponding to an image]
Let $u\in\BV(V)$ be an image. The \emph{flux $\flux_u\in\rca(V;\R^2)$ associated to the image $u$} is defined by
\begin{equation*}
\flux_u=Du^\perp=\nabla u^\perp\lebesgue^2\restr V+[u]\nu^\perp\hdone\restr S_u + D_c u^\perp\,,
\end{equation*}
where superscript $\perp$ denotes a counterclockwise rotation by $\frac\pi2$.
\end{definition}

Note that $\flux_u$ is divergence-free in the distributional sense (in $V$).
Indeed, let $\phi\in\cont_c^\infty(V)$ be a smooth test function with compact support, then
\begin{equation*}
\int_V\phi\,\de(\dv\flux_u)
=-\int_V\nabla\phi\cdot\de\flux_u
=\int_V\nabla\phi^\perp\cdot\de Du
=-\int_V\dv(\nabla\phi^\perp)u\,\de\lebesgue^2
=0
\end{equation*}
due to $\dv(\nabla\phi^\perp)=\frac{\partial^2}{\partial x_2\partial x_1}\phi-\frac{\partial^2}{\partial x_1\partial x_2}\phi=0$.

As mentioned before, we consider the case of $\mu_+$ and $\mu_-$ being any measures of equal mass with
\begin{equation}\label{eqn:measureSpt}
\spt\mu_+\subset\partial\Omega\,,\qquad
\spt\mu_-\subset\partial\Omega\,.
\end{equation}
Without loss of generality we may assume $\partial\Omega$ to contain the origin and to lie in the right halfplane.
The function $\gamma:[0,\hd^1(\partial\Omega))\to\partial\Omega$ shall parameterize $\partial\Omega$ counterclockwise by arclength, starting in $\gamma(0)=0$,
and its image of $[0,t)$ shall be denoted $\partial\Omega_t=\gamma([0,t))$.
Finally, let us introduce the orthogonal projection
\begin{equation*}
\pi_{\partial\Omega}:\R^2\to\partial\Omega\,,
\quad
x\mapsto\arg\min\big\{|x-y|\,:\,y\in\partial\Omega\big\}
\end{equation*}
(in case of nonuniqueness we just arbitrarily pick one closest point to $x$ on $\partial\Omega$).

\begin{definition}[Admissible fluxes and images]
Given finite Borel measures $\mu_+,\mu_-$ satisfying \eqref{eqn:measureSpt}, we define the function
\begin{equation}\label{eqn:bdyCondition}
u(\mu_+,\mu_-):\R^2\to\R,\,x\mapsto(\mu_+-\mu_-)(\partial\Omega_{\gamma^{-1}(\pi_{\partial\Omega}(x))})
\end{equation}
(note that $u(\mu_+,\mu_-)$ is constant along rays orthogonal to $\partial\Omega$).

The sets of \emph{admissible fluxes} and of \emph{admissible images} are given as
\begin{align}
\nonumber\mathcal{A}_\flux(\mu_+,\mu_-)&=\{\flux\in\rca(V;\R^2)\ :\ \spt\flux\subset\overline\Omega,\,\dv\flux=\mu_+-\mu_-\}\,,\\
\label{eq:definition_of_Au}\mathcal{A}_u(\mu_+,\mu_-)&=\{u\in\BV(V)\ :\ u=u(\mu_+,\mu_-)\text{ on }V\setminus\overline\Omega\}.
\end{align}
\end{definition}

The following lemma establishes a one-to-one relation between admissible fluxes and admissible images.
\begin{lemma}\label{thm:imagesAndFluxes}
The mapping $u\mapsto\flux_u\restr\overline\Omega$ from $\mathcal A_u(\mu_+,\mu_-)$ to $\mathcal A_\flux(\mu_+,\mu_-)$ is one-to-one.
In particular, for any $\flux\in\mathcal A_\flux(\mu_+,\mu_-)$ there is a unique image $u_\flux\in\mathcal A_u(\mu_+,\mu_-)$ with $\flux=\flux_{u_\flux}\restr\overline\Omega$.
\end{lemma}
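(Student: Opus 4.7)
The plan is to verify separately that (i) the assignment $u\mapsto\flux_u\restr\overline\Omega$ maps $\mathcal A_u(\mu_+,\mu_-)$ into $\mathcal A_\flux(\mu_+,\mu_-)$, (ii) the assignment is injective, and (iii) it is surjective.

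For (i) only the divergence identity is nontrivial. Since the excerpt already shows $\dv\flux_u=0$ on $V$, I would test $\dv(\flux_u\restr\overline\Omega)$ against $\phi\in\cont_c^\infty(V)$. Using the antisymmetry $\nabla\phi\cdot v^\perp=-\nabla\phi^\perp\cdot v$ and the $\BV$ integration-by-parts formula on $\overline\Omega$ with the smooth divergence-free vector field $\nabla\phi^\perp$, the bulk term drops out and only the boundary integral $\int_{\partial\Omega}u^+\,\nabla\phi^\perp\cdot\nu_\Omega\,\de\hdone$ survives, where $u^+$ denotes the outer trace. By admissibility of $u$, this trace equals $U(t):=(\mu_+-\mu_-)(\partial\Omega_t)$ along the arclength parametrization $\gamma$, while $\nabla\phi^\perp\cdot\nu_\Omega=-(\phi\circ\gamma)'(t)$ since $\nu_\Omega^\perp=\gamma'$ in the counterclockwise parametrization. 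A further integration by parts along the closed curve $\partial\Omega$, using $U(0)=U(\hdone(\partial\Omega))=0$ from equal total masses of $\mu_\pm$, produces $\langle\mu_+-\mu_-,\phi\rangle$.

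For (ii), if two admissible images $u_1,u_2$ induce the same flux then $v:=u_1-u_2$ satisfies $v\equiv0$ on $V\setminus\overline\Omega$ and $\flux_v\restr\overline\Omega=0$. The latter forces $Dv\equiv0$ on the connected open set $\Omega$ and vanishing jump across $\partial\Omega$, so $v$ is constant on $\Omega$ with inner trace equal to the zero outer trace, hence $v\equiv0$. For (iii), given $\flux\in\mathcal A_\flux(\mu_+,\mu_-)$, observe that $\flux$ is divergence-free as a distribution on the simply connected open set $\Omega$ since $\spt(\mu_\pm)\subset\partial\Omega$. A Poincar\'e-type lemma for divergence-free vector measures on $\Omega$ (obtainable by mollification, constructing smooth stream functions, and passing to the limit via compactness in $\BV$) yields $\tilde u\in\BV(\Omega)$ with $D\tilde u^\perp=\flux\restr\Omega$, unique up to an additive constant. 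Setting $u_\flux=\tilde u$ on $\Omega$ and $u_\flux=u(\mu_+,\mu_-)$ on $V\setminus\overline\Omega$ defines an element of $\mathcal A_u(\mu_+,\mu_-)$ whose flux matches $\flux$ on $\Omega$ by construction. The residual $\flux_{u_\flux}\restr\overline\Omega-\flux$ is concentrated on $\partial\Omega$ with zero distributional divergence (both summands have divergence $\mu_+-\mu_-$ by step (i) and by admissibility); this rigidity forces it to be a tangential measure on $\partial\Omega$ of locally constant density, which the free additive constant in $\tilde u$ kills.

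The main obstacle is step (iii): producing the $\BV$ stream function $\tilde u$ for a possibly highly singular divergence-free vector measure, and reconciling the singular part of $\flux$ on $\partial\Omega$ with the jump of $u_\flux$ across $\partial\Omega$ through the correct choice of integration constant. The stream-function existence is classical for smooth divergence-free fields on simply connected planar domains, but the measure-valued setting requires a careful mollification-plus-compactness argument, and verifying that the remaining boundary residual is indeed a constant-density tangential measure rests on the fact that a normal component of a measure on $\partial\Omega$ would contribute a distribution of positive order to the divergence, which is inadmissible.
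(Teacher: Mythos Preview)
Your argument is correct, and steps (i) and (ii) match the paper's proof essentially verbatim. For surjectivity, however, you take a genuinely different route. The paper first extends the given flux $\flux$ to a divergence-free vector measure $\tilde\flux$ on all of $\R^2$ by setting $\tilde\flux=\flux+(Du(\mu_+,\mu_-)^\perp)\restr(V\setminus\overline\Omega)$, then mollifies globally and integrates the resulting smooth curl-free field from a fixed basepoint in $V\setminus\overline\Omega$; because the mollified stream function coincides with $u(\mu_+,\mu_-)*\eta_\delta$ outside $B_\delta(\Omega)$, the weak-$*$ limit automatically lands in $\mathcal A_u(\mu_+,\mu_-)$ and satisfies $Du_\flux^\perp\restr\overline\Omega=\flux$ with no boundary correction required. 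Your approach instead constructs the stream function on $\Omega$ alone and then patches in the prescribed boundary data, which forces you to analyse the boundary residual and invoke the rigidity of divergence-free vector measures supported on a closed Jordan curve (such a measure must equal $c\,\tau\,\hdone\restr\partial\Omega$ for a constant $c$) in order to absorb it into the free additive constant. Both arguments are valid; the paper's global extension is slicker because the boundary matching comes for free, while your version makes more explicit where the single integration constant enters and why it is uniquely determined.
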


\begin{definition}[Image corresponding to a flux]\label{def:imagesAndFluxes}
Given a flux $\flux\in\mathcal{A}_\flux(\mu_+,\mu_-)$, the image $u_\flux$ from the previous lemma is called the \emph{image corresponding to $\flux$}.
\end{definition}

\begin{proof}[Proof of Lemma\,\ref{thm:imagesAndFluxes}]
We first verify that indeed $\flux_u\restr\overline\Omega\in\mathcal A_\flux(\mu_+,\mu_-)$.
Denoting by $n$ the unit outward normal to $\partial(V\setminus\overline{\Omega})$ and by $T$ the trace operator on $\BV(V\setminus\overline{\Omega})$ \cite[\S\,5.3]{EvansGariepy}, for $\phi\in\cont_c^\infty(V)$ we have
\begin{multline*}
\int_V\phi\,\de\dv(\flux_u\restr\overline\Omega)
=-\int_{\overline\Omega}\nabla\phi\cdot\de\flux_u
= \int_{\overline\Omega}\nabla\phi^\perp\cdot\de Du
= \int_{V}\nabla\phi^\perp\cdot\de Du - \int_{V\setminus\overline{\Omega}}\nabla\phi^\perp\cdot\de Du\\
=-\int_{V} u\dv(\nabla\phi^\perp)\,\de\lebesgue^2
 +\int_{V\setminus \overline\Omega} u\dv(\nabla\phi^\perp)\,\de\lebesgue^2
 -\int_{\partial\Omega} Tu \nabla\phi^\perp\cdot n\,\de\hdone
= -\int_{\partial\Omega}Tu\nabla\phi^\perp\cdot n\,\de\hdone\\
=-\int_{[0,\hd^1(\partial\Omega))}Tu(\gamma(t))\tfrac\de{\de t}\phi(\gamma(t))\,\de t
= \int_{[0,\hd^1(\partial\Omega))}\phi(\gamma(t))\,\de\tfrac\de{\de t}Tu(\gamma(t))
= \int_{\partial\Omega}\phi\,\de(\mu_+-\mu_-)
= \int_V\phi\,\de(\mu_+-\mu_-)\,,
\end{multline*}
where we have used $Tu(\gamma(t))=u(\mu_+,\mu_-)\circ\gamma(t)$ with $\frac\de{\de t}u(\mu_+,\mu_-)\circ\gamma=\mu_+-\mu_-$.

Next, let $\flux\in\mathcal A_\flux(\mu_+,\mu_-)$ be given.
We extend this flux to a flux $\tilde\flux$ on $\R^2$ as follows,
\begin{equation*}
\tilde\flux=\flux+(Du(\mu_+,\mu_-)^\perp)\restr(V\setminus\overline\Omega)\,.
\end{equation*}
It is straightforward to check $\dv\tilde\flux=0$ in the distributional sense. Indeed, for $\phi \in \smooth(\R^2)$ and $n$ the inner normal to $\partial\Omega$, we have
\begin{multline*}
\int_{\R^2} \nabla\phi\cdot\de\tilde\flux(x)
= \int_{\overline{\Omega}} \nabla\phi\cdot\de\flux(x)
+ \int_{\R^2\setminus\overline\Omega} \nabla\phi\cdot\de Du(\mu_+,\mu_-)^\perp
=-\int_{\overline{\Omega}} \phi\, \de\dv\flux
- \int_{\R^2\setminus\overline\Omega} \nabla\phi^\perp\cdot\de Du(\mu_+,\mu_-)\\
= \int_{\overline{\Omega}} \phi\, \de(\mu_--\mu_+)
- \int_{\partial\Omega} Tu \nabla\phi^\perp\cdot n\,\de\hdone
+ \int_{\R^2\setminus\overline\Omega}u(\mu_+,\mu_-) \dv(\nabla\phi^\perp)\,\de\lebesgue^2\\
= \int_{\overline{\Omega}} \phi\, \de(\mu_--\mu_+) + \int_{\partial\Omega} \phi\, \de(\mu_+-\mu_-) = 0\,.
\end{multline*}

Next, we mollify $\tilde\flux$ using a smooth Dirac sequence $\eta_\delta\in\cont_c^\infty(\R^2)$ with support in the $\delta$-ball around the origin, $\spt\eta_\delta\subset B_\delta(0)$,
\begin{equation*}
\flux_\delta=\tilde\flux*\eta_\delta\,.
\end{equation*}
For any $\delta>0$, $\flux_\delta$ is smooth and divergence-free
(with a slight misuse of notation we interpret $\flux_\delta$ as the density with respect to $\lebesgue^2$).
Now define
\begin{equation*}
\mathcal G_\delta=-\flux_\delta^\perp\,,\qquad
u_\delta(x)=(u(\mu_+,\mu_-)*\eta_\delta)(-1,0)+\int_{\gamma}\mathcal G_\delta\cdot\de\gamma\,,
\end{equation*}
where $\gamma$ is any Lipschitz-continuous path connecting $(-1,0)^T$ with $x$.
Since $\mathcal G_\delta$ is curl-free (recall that the curl of a two-dimensional vector field is defined via the curl of the embedded vector field in $\R^3$) and thus conservative, $u_\delta$ is independent of the particular path $\gamma$ chosen.
We have
\begin{equation*}
\nabla u_\delta
=\mathcal G_\delta
=-\flux_\delta^\perp
\weakstarto-\tilde\flux^\perp
\end{equation*}
in $\rca(V)$ as $\delta\to0$.
Furthermore, due to $\nabla u_\delta=-\flux_\delta^\perp=(Du(\mu_+,\mu_-))*\eta_\delta=\nabla(u(\mu_+,\mu_-)*\eta_\delta)$ in $V\setminus B_\delta(\Omega)$ and $u_\delta(-1,0)=(u(\mu_+,\mu_-)*\eta_\delta)(-1,0)$ we have $u_\delta=u(\mu_+,\mu_-)*\eta_\delta$ on $V\setminus B_\delta(\Omega)$
so that $$\sup_{x\in V\setminus B_\delta(\Omega)}|u_\delta(x)|\leq \sup_{x\in\R^2}|u(\mu_+,\mu_-)(x)|\leq\mu_+(\partial\Omega)+\mu_-(\partial\Omega)\,.$$ Thus,
$\|u_\delta\|_{L^1(V)}$ is uniformly bounded by virtue of Poincar\'e's inequality and the uniform boundedness of $\|\nabla u_\delta\|_{L^1(V)}$.
Combining the above properties, we see that as $\delta\to0$ a subsequence of $u_\delta$ converges weakly-$*$ in $\BV(V)$ against some function $u_\flux\in\BV(V)$
which satisfies all conditions from the definition of $\mathcal{A}_u(\mu_+,\mu_-)$ as well as $Du_\flux^\perp\restr\overline\Omega=\flux$
(recall that $u_n$ is said to converge weakly-$*$ in $\BV(V)$ against $u$, $u_n \weakstarto u$, if $u_n \to u$ in $L^1(V)$ and $Du_n \weakstarto Du$ in $\rca(V)$ \cite[Def.\,3.11]{AmFuPa00}).

The uniqueness of $u_\flux$ follows in the standard way:
Assume, $u_i\in\mathcal A_u(\mu_+,\mu_-)$, $i=1,2$, satisfy $\flux=Du_i^\perp\restr\overline\Omega$.
Then $u=u_2-u_1$ satisfies $Du=0$ as well as $u=0$ on $V\setminus\Omega$ so that $u\equiv0$.
\end{proof}

\subsection{Image inpainting problem induced by 2D network optimization}\label{sec:inpainting}
Let us now introduce functionals $\tilde \urbPlEn^{\varepsilon,a,\mu_+,\mu_-},\tilde\brTptEn^{\varepsilon,\mu_+,\mu_-}$ acting on images rather than mass fluxes.

\begin{definition}[Mumford--Shah-type functionals]
We set $\urbPlImg^{\varepsilon,a},\brTptImg^{\varepsilon}:\BV(V)\to[0,\infty]$,
\begin{align*}
\urbPlImg^{\varepsilon,a}[u]
&=a\int_{\overline\Omega\setminus S_u}|Du|\,\de x+\int_{S_u\cap\overline\Omega} \min\{a[u],[u]+\varepsilon\}\,\de\hdone(x)\,,\\
\brTptImg^{\varepsilon}[u]
&=\int_{S_u\cap\overline\Omega}[u]^{1-\varepsilon}\,\de\hdone(x)+\iota_0((\nabla u\lebesgue^2+D_cu)\restr\overline\Omega)\,,
\end{align*}
where the indicator function is given by
\begin{displaymath}
 \iota_0(\mu) = \begin{cases}
                 0 & \text{if}\ \mu = 0\\
                 \infty & \text{otherwise}.
                \end{cases}
\end{displaymath}
Furthermore, for $\mu_+,\mu_-\in\fbm(\R^2)$ of equal mass and satisfying \eqref{eqn:measureSpt},
we define the following functionals on $\BV(V)$,
\begin{align*}
\tilde\urbPlEn^{\varepsilon,a,\mu_+,\mu_-}[u]&=\begin{cases}
\urbPlImg^{\varepsilon,a}[u]&\text{if }u\in\mathcal A_u(\mu_+,\mu_-),\\
\infty&\text{else,}
\end{cases}\\
\tilde\brTptEn^{\varepsilon,\mu_+,\mu_-}[u]&=\begin{cases}
\brTptImg^{\varepsilon}[u]&\text{if }u\in\mathcal A_u(\mu_+,\mu_-),\\
\infty&\text{else.}
\end{cases}
\end{align*}
\end{definition}

Both functionals can be viewed as image inpainting functionals.
Indeed, the image $u$ is prescribed on $V\setminus\overline\Omega$, and inside $\overline\Omega$ it is to be reconstructed as the minimizer of $\tilde \urbPlEn^{\varepsilon,a,\mu_+,\mu_-}$ or $\tilde \brTptEn^{\varepsilon,\mu_+,\mu_-}$.
Furthermore, both functionals are of Mumford--Shah-type in that their integrand is convex in $Du$ away from the jump set $S_u$, but subadditive on the jump set.

In this section we prove the following theorem, expressing the relation between the network costs and the image processing functionals.

\begin{theorem}[Lower bound on transport problems via Mumford--Shah functionals]\label{thm:equivalence_of_transport_problems_with_mumford-shah}
For any flux $\flux\in\mathcal{A}_\flux(\mu_+,\mu_-)$ and the corresponding image $u_\flux\in\mathcal{A}_u(\mu_+,\mu_-)$ (see Definition \ref{def:imagesAndFluxes}) we have
\begin{equation*}
\urbPlEn^{\varepsilon,a,\mu_+,\mu_-}[\flux]\geq\tilde \urbPlEn^{\varepsilon,a,\mu_+,\mu_-}[u_\flux]\,,\qquad
\brTptEn^{\varepsilon,\mu_+,\mu_-}[\flux]\geq\tilde\brTptEn^{\varepsilon,\mu_+,\mu_-}[u_\flux]\,.
\end{equation*}
\end{theorem}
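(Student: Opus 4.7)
The plan is to verify the inequality with equality first on discrete mass fluxes, and then to pass to the limit along an approximating graph sequence using lower semicontinuity of Mumford--Shah-type functionals. First I would argue that if $G$ is a transport path between discrete measures $\mu_+,\mu_-$ concentrated on $\partial\Omega$, the associated image $u_{\flux_G}$ from Lemma~\ref{thm:imagesAndFluxes} is piecewise constant on $\overline{\Omega}$ with discontinuity set $S_{u_{\flux_G}}\subset\bigcup_{e\in E(G)} e$, jump height $[u_{\flux_G}]=w(e)$ on $e$, and vanishing Lebesgue-continuous and Cantor parts of $Du_{\flux_G}$ inside $\overline\Omega$. This can be read off from the mollification construction in the proof of Lemma~\ref{thm:imagesAndFluxes}, or obtained directly by integrating $-\flux_G^{\perp}$ along polylines crossing each edge transversally. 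Plugging into the definitions then yields the exact equalities $\urbPlImg^{\varepsilon,a}[u_{\flux_G}]=\urbPlXia^{\varepsilon,a}(G)$ and $\brTptImg^{\varepsilon}[u_{\flux_G}]=\XiaEn^{\varepsilon}(G)$.

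Next, for a general $\flux\in\mathcal{A}_\flux(\mu_+,\mu_-)$ I may assume the relevant network cost on the left-hand side is finite. I would pick an approximating graph sequence $(\mu_+^k,\mu_-^k,\flux_{G_k})\weakstarto(\mu_+,\mu_-,\flux)$ realizing the infimum in the definition of $\urbPlXia^{\varepsilon,a}(\flux)$ (resp.\ $\XiaEn^{\varepsilon}(\flux)$), so that by the first step $\urbPlImg^{\varepsilon,a}[u_{\flux_{G_k}}]=\urbPlXia^{\varepsilon,a}(G_k)$ (resp.\ for branched transport), with $u_{\flux_{G_k}}\in\mathcal{A}_u(\mu_+^k,\mu_-^k)$. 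The key technical point is to show that, up to a subsequence, $u_{\flux_{G_k}}\weakstarto u_\flux$ in $\BV(V)$: on $\overline\Omega$ this is precisely $Du_{\flux_{G_k}}^{\perp}\restr\overline{\Omega}=\flux_{G_k}\weakstarto\flux=Du_\flux^{\perp}\restr\overline{\Omega}$; on $V\setminus\overline{\Omega}$ the explicit formula \eqref{eqn:bdyCondition} combined with weak-$*$ convergence of $\mu_\pm^k$ gives a.e.\ pointwise convergence of the boundary data; and a uniform $L^1$ bound via Poincar\'e's inequality combined with the uniform energy bound on $|Du_{\flux_{G_k}}|$ supplies $\BV$ weak-$*$ compactness, so the limit must coincide with $u_\flux$ by uniqueness.

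Finally, I would invoke the standard lower semicontinuity theorem for functionals $\int g(|Du|)+\int_{S_u}\phi([u])\,\de\hdone$ on $\BV$ with convex linearly-growing bulk integrand $g$ and concave subadditive jump integrand $\phi$ vanishing at $0$ (e.g.\ \cite[Thm.~5.22]{AmFuPa00}). For urban planning, $g(t)=at$ and $\phi(t)=\min\{at,t+\varepsilon\}$ fit the framework directly, yielding $\tilde\urbPlEn^{\varepsilon,a,\mu_+,\mu_-}[u_\flux]\le\liminf_k\urbPlImg^{\varepsilon,a}[u_{\flux_{G_k}}]=\urbPlEn^{\varepsilon,a,\mu_+,\mu_-}[\flux]$. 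For branched transport, $\phi(t)=t^{1-\varepsilon}$ is concave and subadditive; the $\iota_0$ bulk term is handled via Ambrosio's $\SBV$-closure theorem applied with concave jump integrand of superlinear behavior near zero, which forces the weak-$*$ limit of the piecewise-constant approximants $u_{\flux_{G_k}}$ to lie in $\SBV$ with vanishing Lebesgue-continuous and Cantor parts in $\overline\Omega$, so the indicator evaluates to $0$ and lower semicontinuity of the jump part closes the estimate. The main obstacle I anticipate is the careful verification of the hypotheses of this $\SBV$-closure/lower semicontinuity result in the branched transport case, since the concave integrand $t^{1-\varepsilon}$ does not bound $\hdone(S_u)$ directly and one has to rule out the emergence of a Cantor or Lebesgue-continuous part as more and more fine jumps accumulate along the approximation.
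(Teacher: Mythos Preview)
Your overall strategy matches the paper's: establish equality on discrete graphs, pass to the limit along an approximating graph sequence, and conclude via lower semicontinuity. However, there is a genuine gap in your second step. The approximating graph sequence $(\mu_+^k,\mu_-^k,\flux_{G_k})$ furnished by the definition of the relaxed cost is \emph{not} required to have $\spt\mu_\pm^k\subset\partial\Omega$ or $\spt\flux_{G_k}\subset\overline\Omega$. Consequently $\flux_{G_k}\notin\mathcal A_\flux(\mu_+^k,\mu_-^k)$ in general, the image $u_{\flux_{G_k}}$ from Lemma~\ref{thm:imagesAndFluxes} is not even defined, and your ``first step'' equality cannot be applied. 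The paper closes this gap with a separate lemma (Lemma~\ref{thm:supportReduction}): one replaces $\mu_\pm^k$ by their closest-point projections $\tilde\mu_\pm^k$ onto $\partial\Omega$ and adjoins to $G_k$ optimal transport paths between $\mu_\pm^k$ and $\tilde\mu_\pm^k$, whose costs vanish in the limit since $\Wdone(\mu_\pm^k,\tilde\mu_\pm^k)\to0$. Only after this reduction can one form the images and run the compactness/convergence argument you describe.

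On the lower semicontinuity side, your plan for urban planning is fine and essentially what the paper does, except that the standard result in \cite{AmFuPa00} applies on open sets, so the paper first proves lsc on the $\delta$-neighbourhood $B_\delta(\Omega)$ and then lets $\delta\to0$ using that the boundary data contribute only $O(\delta)$. For branched transport your anticipated obstacle is real: the raw integrand $t^{1-\varepsilon}$ gives no control on $\hdone(S_u)$, so Ambrosio's $\SBV$-closure theorem is not directly available. The paper sidesteps this entirely by writing $\brTptImg^{\varepsilon}=\sup_n M^n_{\overline\Omega}$ with $M^n$ having bulk integrand $n|Du|$ and jump integrand $\min\{n[u],[u]^{1-\varepsilon}\}$; each $M^n$ is lsc by the standard theorem, and the supremum of lsc functionals is lsc. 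This truncation trick is both simpler and avoids the difficulty you flagged.
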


\begin{remark}[Upper bound]\label{rem:upper_bound}
We believe that the opposite inequality holds as well, but we do not attempt a proof here.
It would imply that the urban planning and branched transport problem can in fact be formulated
as the minimization of the above variants of the Mumford--Shah image segmentation functional.
In view of Lemma\,\ref{thm:equivalenceDiscrete} below, a proof would require to show a certain limsup inequality,
namely that the functionals $\urbPlImg^{\varepsilon,a,\mu_+,\mu_-}$ and $\brTptImg^{\varepsilon,\mu_+,\mu_-}$ are actually the sequentially weakly-$*$ lower semi-continuous envelopes
of their restrictions to images $u_{\flux_G}$ corresponding to discrete graphs $G$.
For the standard Mumford--Shah functional this is implied by \cite[Theorem 0.6]{DMMS92}.
For other variants of the Mumford--Shah functional one would have to carefully study the regularity of the singular set of minimizers.
The difficulties one would encounter are of similar type as those which characterize the regularity theory for the standard Mumford--Shah minimizers (which still is only partial despite several decades of investigation).
In addition, existence theorems like \cite[Theorem 4.1]{Amb90} or \cite[Theorem 5.24]{AmFuPa00} do not apply in our case.
\end{remark}

In order to prove Theorem \ref{thm:equivalence_of_transport_problems_with_mumford-shah} we need some preliminary results.

\begin{lemma}[Reduction of measure support]\label{thm:supportReduction}
Let $\mu_+,\mu_-\in\fbm(\R^2)$ of equal mass, satisfying \eqref{eqn:measureSpt},
and let $\J$ either denote the branched transport functional $\XiaEn^{\varepsilon}$ or the urban planning functional $\urbPlXia^{\varepsilon,a}$.
We have
\begin{displaymath}
 \J(\flux) = \inf\left\{\liminf_{n\to\infty} \J(G_n) \ : \ (\flux_{G_n},\mu_+^n,\mu_-^n) \weakstarto (\flux,\mu_+,\mu_-), \,\mu_+^n,\mu_-^n \text{ satisfy \eqref{eqn:measureSpt}}\right\}\,.
\end{displaymath}
\end{lemma}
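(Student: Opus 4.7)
The direction $\J(\flux) \leq \inf\{\cdots\}$ is immediate, since every approximating graph sequence appearing in the right-hand infimum is also a competitor in the original definition of $\J(\flux)$. For the reverse inequality, given any $\eta>0$ I would start from an approximating sequence $(G_n, \mu_+^n, \mu_-^n) \weakstarto (\flux, \mu_+, \mu_-)$ with $\liminf_n \J(G_n) \leq \J(\flux) + \eta$ and construct a modification $(\tilde G_n, \tilde\mu_+^n, \tilde\mu_-^n)$ where $\tilde\mu_\pm^n$ is supported on $\partial\Omega$ and the added cost vanishes as $n\to\infty$; letting $\eta\to 0$ then gives the claim.

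The natural modification is projection: set $\tilde\mu_\pm^n := (\pi_{\partial\Omega})_\# \mu_\pm^n$ and obtain $\tilde G_n$ from $G_n$ by appending, for each atom $x_i$ of $\mu_+^n$ with weight $a_i$, the straight edge from $\pi_{\partial\Omega}(x_i)$ to $x_i$ of weight $a_i$, and the analogous oppositely oriented edge for every atom of $\mu_-^n$. By construction $\dv\flux_{\tilde G_n} = \tilde\mu_+^n - \tilde\mu_-^n$, and the total variation of the added flux equals $\int_V \dist(\,\cdot\,,\partial\Omega)\,\de(\mu_+^n+\mu_-^n)$. Since $\dist(\,\cdot\,,\partial\Omega)$ is continuous and bounded on $V$ and $\mu_\pm^n \weakstarto \mu_\pm$ with $\spt\mu_\pm \subset \partial\Omega$, this integral tends to zero, yielding $\flux_{\tilde G_n} \weakstarto \flux$ and, by a similar concentration argument using the modulus of continuity of test functions, $\tilde\mu_\pm^n \weakstarto \mu_\pm$. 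For urban planning the added cost is dominated by $a\int_V\dist(\,\cdot\,,\partial\Omega)\,\de(\mu_+^n+\mu_-^n)\to 0$, so $\liminf_n\urbPlXia^{\varepsilon,a}(\tilde G_n)\leq\liminf_n\urbPlXia^{\varepsilon,a}(G_n)$.

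The principal obstacle is the branched transport case, where the naive projection adds $\sum_i a_i^{1-\varepsilon}|x_i-\pi_{\partial\Omega}(x_i)|$, a quantity that cannot be bounded by $\sum_i a_i|x_i-\pi_{\partial\Omega}(x_i)|$ because the number and the weights of atoms are unrestricted. I would remedy this by bundling atoms before projecting: choose scales $\delta_n, \eta_n\to 0$, partition $\partial\Omega$ into arcs of length $\delta_n$, and select $\eta_n$ so that the total mass of $\mu_\pm^n$ farther than $\eta_n$ from $\partial\Omega$ tends to zero (possible since $\mu_\pm^n(\{\dist(\,\cdot\,,\partial\Omega)>\eta\})\to 0$ for every fixed $\eta>0$). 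Atoms within $\eta_n$ of $\partial\Omega$ whose projections fall in a common arc are collected to a single representative point on $\partial\Omega$ through a standard Xia-type subtree, whose cost, by the classical branched transport concentration estimate (see e.g.\ \cite{BeCaMo09}), is at most $C(\varepsilon)c^{1-\varepsilon}\max(\delta_n,\eta_n)$ in terms of the cluster mass $c$; summing over arcs with subadditivity of $t\mapsto t^{1-\varepsilon}$ and Jensen's inequality gives a total of order $\max(\delta_n,\eta_n)(\hd^1(\partial\Omega)/\delta_n)^{\varepsilon}M^{1-\varepsilon}$ with $M$ the total mass. Atoms farther than $\eta_n$ from $\partial\Omega$ carry vanishing total mass and are aggregated and shipped to $\partial\Omega$ in bulk at cost $o(1)$ by the same concentration estimate. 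Choosing $\delta_n$ and $\eta_n$ so that both contributions vanish then produces $\liminf_n \XiaEn^{\varepsilon}(\tilde G_n) \leq \liminf_n \XiaEn^{\varepsilon}(G_n)$, completing the proof.
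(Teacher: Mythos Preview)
Your argument is correct, but the paper's proof is considerably simpler, especially in the branched transport case. The paper also projects the atoms, setting $\tilde\mu_\pm^n=\sum_i a_i^n\delta_{\pi_{\partial\Omega}(x_i^n)}$, but instead of building an explicit connecting graph it takes $G_\pm^n$ to be a $\J$-\emph{optimal} transport path between $\mu_\pm^n$ and $\tilde\mu_\pm^n$ (such optimal paths exist and are finite graphs for discrete measures) and then sets $\tilde G_n=G_n\cup G_+^n\cup G_-^n$. For urban planning this gives $\J(G_\pm^n)\leq a\,\Wdone(\mu_\pm^n,\tilde\mu_\pm^n)\to0$, which is essentially your estimate. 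For branched transport, however, the paper avoids your bundling construction entirely: it simply invokes Xia's result that the minimum branched transport cost is continuous with respect to weak-$*$ convergence of source and sink. Since $\mu_\pm^n\weakstarto\mu_\pm$ and $\tilde\mu_\pm^n\weakstarto\mu_\pm$, the optimal cost between $\mu_\pm^n$ and $\tilde\mu_\pm^n$ tends to $\XiaEn^\varepsilon(\mu_\pm,\mu_\pm)=0$. This one-line appeal to continuity replaces your entire clustering scheme with scales $\delta_n,\eta_n$ and the concentration estimate. Your approach has the merit of being self-contained and making the mechanism explicit, but the paper's route is shorter and sidesteps the delicate bookkeeping.
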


\begin{proof}
Let $(\flux_{G_n},\mu_+^n,\mu_-^n) \weakstarto (\flux,\mu_+,\mu_-)$ be an approximating graph sequence. Then, we have $\J(\flux)=\lim_{n\to\infty}\J(G_n)$.
We will construct another approximating graph sequence $(\flux_{\tilde G_n},\tilde\mu_+^n,\tilde\mu_-^n)$ with $\spt\tilde\mu_+^n,\spt\tilde\mu_-^n \subseteq\partial\Omega$ and the same limit energy $\J(\flux)=\lim_{n\to\infty}\J(\tilde G_n)$.
Indeed, for $\mu_+^n=\sum_{i=1}^{N_n}a_i^n\delta_{x_i^n}$ and $\mu_-^n=\sum_{i=1}^{M_n}b_i^n\delta_{y_i^n}$ we set
\begin{equation*}
\tilde\mu_+^n=\sum_{i=1}^{N_n}a_i^n\delta_{\pi_{\partial\Omega}(x_i^n)}\,,\qquad
\tilde\mu_-^n=\sum_{i=1}^{M_n}b_i^n\delta_{\pi_{\partial\Omega}(y_i^n)}\,,
\end{equation*}
where $\pi_{\partial\Omega}:\R^2\to\partial\Omega$ is the closest point projection (as before, in case of non-uniqueness an arbitrary closest point is chosen).
Since $\tilde\mu_+^n$ is the measure $\mu_+^n$ projected onto $\partial\Omega$, which is Lipschitz regular and contains the support of $\mu_+$, it is clear that $\mu_+^n\weakstarto\mu_+$ implies $\tilde\mu_+^n\weakstarto\mu_+$ (analogously for $\tilde\mu_-^n$).
Let further $G_+^n$ be a $\J$-minimizing transport path between $\tilde\mu_+^n$ and $\mu_+^n$ and $G_-^n$ a $\J$-minimizing transport path between $\mu_-^n$ and $\tilde\mu_-^n$.
Such transport paths exist, since optimal mass fluxes between discrete measures are known to be discrete graphs
(in the urban planning case this follows from the same argument as in \cite[proof of Prop.\,4.3.1]{BrWi15-equivalent};
in the branched transport case this is contained in \cite[Thm.\,4.7]{Bernot-Caselles-Morel-Structure-Branched} or \cite[Thm.\,4.10]{Xia-Interior-Regularity}).
Moreover, note that by Remark\,\ref{rem:urban_planning_bounded_by_a_times_wasserstein_one} the minimum urban planning cost is bounded above by $a$ times the Wasserstein-1 distance $\Wdone$, which metrises weak-$*$ convergence,
and that the minimum branched transport cost is continuous with respect to weak-$*$ convergence \cite[Lemma 4.1]{Xia-Optimal-Paths}.
Thus we have
\begin{equation*}
\J(G_\pm^n)\to0\text{ as }n\to\infty\text{ due to }\Wdone(\mu_\pm^n,\tilde\mu_\pm^n)\to0\,.
\end{equation*}
We now set $\tilde G_n=(V(G_n)\cup V(G_+^n)\cup V(G_-^n),E(G_n)\cup E(G_+^n)\cup E(G_-^n))$.
It is clear that $\tilde G_n$ is a transport path between $\tilde\mu_+^n$ and $\tilde\mu_-^n$.
Furthermore, $\J(G_\pm^n)\to_{n\to\infty}0$ implies $\flux_{G_\pm^n}\weakstarto0$ and thus
\begin{equation*}
\flux_{\tilde G_n}=\flux_{G_n}+\flux_{G_+^n}+\flux_{G_-^n}\weakstarto\flux\text{ as }n\to\infty.
\end{equation*}
Finally, as desired we have
\begin{equation*}
\J(\tilde G_n)\leq\J(G_n)+\J(G_+^n)+\J(G_-^n)\to\J(\flux)
\quad\text{and}\quad
\J(\tilde G_n)\geq\J(G_n)\to\J(\flux)
\end{equation*}
as $n\to\infty$.
\end{proof}

\begin{lemma}[Equivalence for discrete fluxes]\label{thm:equivalenceDiscrete}
Let $\mu_+,\mu_-\in\fbm(\R^2)$ of equal mass, satisfying \eqref{eqn:measureSpt},
and let $G$ be a transport path between $\mu_+$ and $\mu_-$. Then
\begin{equation*}
\urbPlXia^{\varepsilon,a}[G]=\urbPlImg^{\varepsilon,a}[u_{\flux_{G}}]\,,\qquad
\XiaEn^{\varepsilon}[G]=\brTptImg^{\varepsilon}[u_{\flux_{G}}]\,
\end{equation*}
for $\flux_G$ from Definition\,\ref{def:discrete_mass_flux} and $u_{\flux_{G}}$ from Definition\,\ref{def:imagesAndFluxes}.
\end{lemma}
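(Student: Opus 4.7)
My plan is to exploit that for a discrete graph $G$ the image $u_{\flux_G}$ is piecewise constant inside $\overline\Omega$, so its distributional derivative there is a pure jump measure supported on the edges of $G$ with jump magnitudes equal to the edge weights. Both image functionals then reduce directly to the discrete edge sums defining the graph functionals.

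First, Lemma \ref{thm:imagesAndFluxes} gives $\flux_G = Du_{\flux_G}^\perp\restr\overline\Omega$, which (since $(\cdot)^{\perp\perp}=-\Id$) is equivalent to
\begin{equation*}
Du_{\flux_G}\restr\overline\Omega \;=\; -\flux_G^\perp \;=\; \sum_{e\in E(G)} w(e)\,(-\hat e^\perp)\;\hdone\restr e\,.
\end{equation*}
The right-hand side is absolutely continuous with respect to $\hdone\restr\bigcup_{e}e$, hence purely $(n-1)$-rectifiable. By uniqueness of the $\BV$ decomposition $Du_{\flux_G} = \nabla u_{\flux_G}\lebesgue^2 + [u_{\flux_G}]\nu\hdone\restr S_{u_{\flux_G}} + D_cu_{\flux_G}$ into mutually singular parts, comparing measures forces
\begin{equation*}
(\nabla u_{\flux_G}\lebesgue^2)\restr\overline\Omega = 0\,,\qquad (D_cu_{\flux_G})\restr\overline\Omega = 0\,,
\end{equation*}
$S_{u_{\flux_G}}\cap\overline\Omega = \bigcup_{e\in E(G)} e$ up to $\hdone$-null sets, and $[u_{\flux_G}] = w(e)$ on edge $e$ (orienting $\nu = -\hat e^\perp$). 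Plugging these identities into the image functionals yields
\begin{equation*}
\brTptImg^{\varepsilon}[u_{\flux_G}] \;=\; \int_{S_{u_{\flux_G}}\cap\overline\Omega}[u_{\flux_G}]^{1-\varepsilon}\,\de\hdone \;=\; \sum_{e\in E(G)} w(e)^{1-\varepsilon}\,l(e) \;=\; \XiaEn^{\varepsilon}[G]
\end{equation*}
(the $\iota_0$ term vanishes by the previous display) and, since $a\int_{\overline\Omega\setminus S_{u_{\flux_G}}}|Du_{\flux_G}|$ also vanishes,
\begin{equation*}
\urbPlImg^{\varepsilon,a}[u_{\flux_G}] \;=\; \sum_{e\in E(G)}\min\{aw(e),\,w(e)+\varepsilon\}\,l(e) \;=\; \urbPlXia^{\varepsilon,a}[G]\,.
\end{equation*}

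The delicate point is to rule out additional contributions to $S_{u_{\flux_G}}\cap\overline\Omega$ beyond the edges, in particular a possible jump strip along $\partial\Omega$ arising from a mismatch between the interior values of $u_{\flux_G}$ and the prescribed boundary data $u(\mu_+,\mu_-)$. This, however, is automatic from the identity $Du_{\flux_G}\restr\overline\Omega = -\flux_G^\perp$ above: the right-hand side is supported on $\bigcup_e e$, which meets $\partial\Omega$ only at the finitely many endpoint vertices (an $\hdone$-null set), so no jump across $\partial\Omega$ is possible and the inner and outer boundary traces of $u_{\flux_G}$ must coincide $\hdone$-a.e.\ on $\partial\Omega$. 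A minor bookkeeping caveat is that the edges of $G$ should be assumed to meet only at vertices (subdividing overlapping edges if necessary), so that each segment on the support of $\flux_G$ is counted exactly once in the edge sums above.
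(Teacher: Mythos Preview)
Your proof is correct and follows essentially the same route as the paper's own argument: both use Lemma~\ref{thm:imagesAndFluxes} to identify $Du_{\flux_G}\restr\overline\Omega$ with the rotated edge measure, read off that the absolutely continuous and Cantor parts vanish while $S_{u_{\flux_G}}\cap\overline\Omega=\bigcup_e e$ with $[u_{\flux_G}]=w(e)$, and then substitute into the image functionals. Your version is slightly more explicit about invoking the mutual singularity of the $\BV$ decomposition and about excluding boundary jumps and overlapping edges, but these are refinements of the same argument rather than a different approach.
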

\begin{proof}
For simplicity let us abbreviate $u\equiv u_{\flux_G}$.
By Lemma\,\ref{thm:imagesAndFluxes}, $u\in\mathcal{A}_u(\mu_+,\mu_-)$ as well as
\begin{equation*}
Du\restr\overline\Omega=-\sum_{e\in E(G)}w(e)(\hdone\restr e)\hat e^\perp
\end{equation*}
for $\hat e$ the unit vector parallel to the edge $e$.
In particular, $u$ is piecewise constant with a discontinuity set
$S_{u}\cap\overline\Omega=\bigcup_{e\in E(G)}e$ on which it has jump size $[u]=w(e)$.
Inserting this into the urban planning and branched transport energy, we obtain
\begin{align*}
\urbPlImg^{\varepsilon,a}[u]
&=\int_{S_{u}\cap\overline\Omega} \min\{a[u],[u]+\varepsilon\}\,\de\hdone(x)%\\
%&\hspace{13.5ex}
=\sum_{e\in E(G)}l(e)\min\{aw(e),w(e)+\varepsilon\}
=\urbPlXia^{\varepsilon,a}[G]\,,\\
\brTptImg^{\varepsilon}[u]
&=\int_{S_{u}\cap\overline\Omega} [u]^{1-\varepsilon}\,\de\hdone(x)
=\sum_{e\in E(G)}l(e)w(e)^{1-\varepsilon}
=\XiaEn^{\varepsilon}[G]\,,
\end{align*}
the desired result.
\end{proof}

Next let us define the set of functions
\begin{equation}\label{eq:definition_of_H}
H=\{u\in\BV(V)\,:\,u(x)=u(\pi_{\partial\Omega}(x))\,\forall x\in V\setminus\overline\Omega\}\,.
\end{equation}
Note that $H$ is closed in $\BV(V)$ with respect to weak-$*$ convergence (even with respect to $L^1(V)$-convergence).
Indeed, let $u_n\in H$ with $u_n\weakstarto u$ in $\BV(V)$, then a subsequence converges pointwise almost everywhere against $u$,
and since the $u_n$ are constant along rays orthogonal to $\partial\Omega$, the pointwise limit must be so as well.

\begin{lemma}[Lower semi-continuity]\label{thm:lsc}
$\urbPlImg^{\varepsilon,a}$ and $\brTptImg^\varepsilon$ are sequentially weakly-$*$ lower semi-continuous on $H$.
\end{lemma}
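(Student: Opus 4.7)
My plan is to recognise both $\urbPlImg^{\varepsilon,a}$ and $\brTptImg^\varepsilon$ as free-discontinuity functionals of Mumford--Shah type and invoke Ambrosio's classical lower-semicontinuity theory. Both take the form
\begin{equation*}
F[u]=\int_{\overline\Omega}f(\nabla u)\,\de\lebesgue^2+\int_{\overline\Omega}f^\infty\!\left(\tfrac{\de D_cu}{\de|D_cu|}\right)\de|D_cu|+\int_{S_u\cap\overline\Omega}\phi([u])\,\de\hdone\,,
\end{equation*}
with $f\colon\R^2\to[0,\infty]$ convex, lsc and positively $1$-homogeneous (so $f^\infty=f$) and $\phi\colon[0,\infty)\to[0,\infty]$ concave, lsc, with $\phi(0)=0$. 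For $\urbPlImg^{\varepsilon,a}$ one takes $f(p)=a|p|$ and $\phi(t)=\min\{at,t+\varepsilon\}$; for $\brTptImg^\varepsilon$ one takes $f(0)=0$, $f(p)=+\infty$ for $p\neq0$, and $\phi(t)=t^{1-\varepsilon}$.

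The decisive hypothesis of Ambrosio's lsc theorem is the compatibility condition at the origin, $\phi'(0^+)\ge\sup_{|\nu|=1}f^\infty(\nu)$, which prevents bulk and surface energy from trading under weak-$*$ limits. It holds with equality in both our cases ($a=a$, respectively $+\infty=+\infty$). Together with the convexity/concavity/lsc of the integrands, this yields sequential weak-$*$ lsc of $F$ on $\BV(V)$ (cf.~\cite[\S\,5.5]{AmFuPa00}), which restricts to lsc on $H\subset\BV(V)$; in the urban-planning case this already ends the proof.

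For $\brTptImg^\varepsilon$ the non-finite-valued bulk integrand falls outside the scope of the classical statements, so I would argue separately. Given $u_n\weakstarto u$ in $\BV(V)$ with $\liminf\brTptImg^\varepsilon[u_n]<\infty$, pass to a subsequence realising the liminf with uniformly bounded energy; finiteness forces $(\nabla u_n\lebesgue^2+D_cu_n)\restr\overline\Omega\equiv0$, so that $u_n|_{\overline\Omega}\in\SBV(\overline\Omega)$ with $\int_{S_{u_n}\cap\overline\Omega}[u_n]^{1-\varepsilon}\,\de\hdone\leq C$. Since $\psi(t)=t^{1-\varepsilon}$ satisfies $\psi(t)/t\to+\infty$ as $t\to0^+$, Ambrosio's $\SBV$-closure theorem yields $u|_{\overline\Omega}\in\SBV(\overline\Omega)$, so that the indicator bulk term vanishes on the limit; the surface part is then lsc by the concave-jump result above. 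I expect this branched-transport step to be the main obstacle, but the superlinear-at-zero growth of $\psi$ is exactly the hypothesis $\SBV$-closure requires, so it goes through.
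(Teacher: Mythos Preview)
Your plan has a genuine gap: the functionals are integrated over the \emph{closed} set $\overline\Omega$, and Ambrosio's lower-semicontinuity theorems (e.g.\ \cite[Thm.\,5.4]{AmFuPa00}) are stated for open domains. Your claim that ``this yields sequential weak-$*$ lsc of $F$ on $\BV(V)$'' is actually false. For instance, with $\Omega=(0,1)^2$ take $u_n=\chi_{\{x_1>1+1/n\}}\in\BV(V)$; then $S_{u_n}\cap\overline\Omega=\emptyset$ so $\urbPlImg^{\varepsilon,a}[u_n]=0$, yet $u_n\weakstarto u=\chi_{\{x_1>1\}}$ with $S_u\cap\overline\Omega=\{1\}\times[0,1]$ and $\urbPlImg^{\varepsilon,a}[u]=\min\{a,1+\varepsilon\}>0$. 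The restriction to $H$ is therefore not a formality you can append at the end; it is what rules out such boundary concentration, and your argument never uses it.

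The paper's proof addresses exactly this point. It enlarges $\overline\Omega$ to the open neighbourhood $B_\delta=B_\delta(\Omega)$, applies \cite[Thm.\,5.4]{AmFuPa00} there, and then exploits the defining property of $H$ (functions constant along rays orthogonal to $\partial\Omega$ in $V\setminus\overline\Omega$) to show that the excess energy on $B_\delta\setminus\overline\Omega$ is $O(\delta)$ uniformly along the sequence; letting $\delta\to0$ yields the result. For $\brTptImg^\varepsilon$ the paper avoids the non-finite bulk integrand not via $\SBV$-closure but by writing $\brTptImg^\varepsilon=\sup_n M^n_{\overline\Omega}$ with $M^n$ a standard Mumford--Shah-type functional (bulk $n|Du|$, jump $\min\{n[u],[u]^{1-\varepsilon}\}$), each of which is lsc by the same $B_\delta$ argument. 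Your $\SBV$-closure idea for the indicator term is reasonable in spirit, but it inherits the same closed-domain defect in the surface part, and you would still need the $H$-structure to repair it.
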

\begin{proof}
For $u\in H$, $n\in\N$, and any measurable set $B\subset V$ define
\begin{align*}
E_B[u]
&=a|Du|(B\setminus S_u)+\int_{S_u\cap B} \min\{a[u],[u]+\varepsilon\}\,\de\hdone\,,\\
M_B^n[u]
&=\int_{S_u\cap B}\min\{n[u],[u]^{1-\varepsilon}\}\,\de\hdone+n|Du|(B\setminus S_u)\,.
\end{align*}
We have $\urbPlImg^{\varepsilon,a}[u]=E_{\overline\Omega}[u]$ as well as $\brTptImg^\varepsilon[u]=\sup_nM_{\overline\Omega}^n[u]$.
Note that by the monotone convergence theorem one has
\begin{displaymath}
\sup_n\int_{S_u\cap B}\min\{n[u],[u]^{1-\varepsilon}\}\,\de\hdone=\lim_{n\to\infty}\int_{S_u\cap B}\min\{n[u],[u]^{1-\varepsilon}\}\,\de\hdone=\int_{S_u\cap B}[u]^{1-\varepsilon}\,\de\hdone.
\end{displaymath}
Thus it suffices to show the sequential weak-$*$ lower semi-continuity of $E_{\overline\Omega}$ and $M_{\overline\Omega}^n$ for all $n$.

Now let $B_{\delta}=B_\delta(\Omega)$ be the open $\delta$-neighbourhood of $\Omega$.
By \cite[Thm.\,5.4]{AmFuPa00}, $E_{B_\delta}$ and $M_{B_\delta}^n$ are sequentially weakly-$*$ lower semi-continuous on $\BV(B_\delta)$.
Consider a sequence $u_k\weakstarto u$ in $H$ and let $C\in\R$ be an upper bound on its total variation, $C>\|Du_k\|_\rca,\|Du\|_\rca$.
We have
\begin{displaymath}
E_{\overline\Omega}(u)
\leq E_{B_\delta}(u)
\leq\liminf_{k\to\infty}E_{B_\delta}(u_k) \leq\liminf_{k\to\infty}E_{\overline\Omega}(u_k)+a|Du|(B_\delta\setminus\overline\Omega)
\leq\liminf_{k\to\infty}E_{\overline\Omega}(u_k)+2a\delta C\,.
\end{displaymath}
Since $\delta$ is arbitrary, the above implies the sequential weak-$*$ lower semi-continuity of $E_{\overline\Omega}$.
Analogously,
\begin{displaymath}
M_{\overline\Omega}^n(u)
\leq M_{B_\delta}^n(u)
\leq\liminf_{k\to\infty}M_{B_\delta}(u_k)
\leq\liminf_{k\to\infty}M_{\overline\Omega}(u_k)+n|Du|(B_\delta\setminus\overline\Omega)
\leq\liminf_{k\to\infty}M_{\overline\Omega}(u_k)+2n\delta C\,,
\end{displaymath}
again implying the sequential weak-$*$ lower semi-continuity of $M_{\overline\Omega}^n$ by the arbitrariness of $\delta$.
\end{proof}

\begin{remark}[Density of piecewise constant images]\label{rem:density_of_piecewise_constant_images}
Let $\SBV(V)\subset\BV(V)$ denote the space of special functions of bounded variation, that is, those functions in $\BV(V)$ whose derivative has no Cantor part.
Furthermore, let $D\subset\SBV(V)$ denote the set of piecewise constant images $u$ whose discontinuity set is composed of finitely many straight lines.

$D$ is dense in $\BV(V)$ with respect to weak-$*$ convergence.
Indeed, consider a quadrilateral grid $\mathcal G$ over $V$ of sidelength $\frac1n$ and approximate $u\in\BV(V)$ by the function $u_n\in D$
which on each square face of $\mathcal G$ equals the average of $u$ on that face,
\begin{displaymath}
 u_n(y) = \fint_{V_{ij}^n} u \,\de x
 \qquad
 \text{for all }y\in V_{ij}^n:=([\tfrac{i}{n},\tfrac{i+1}{n})\times[\tfrac{j}{n},\tfrac{j+1}{n}))\cap V\,.
\end{displaymath}

We have $u_n\to u$ strongly in $L^1(V)$, which together with the boundedness of $\|Du_n\|_{\rca}$ implies $u_n\weakstarto u$ up to a subsequence.
\end{remark}

\begin{proof}[Proof of Theorem\,\ref{thm:equivalence_of_transport_problems_with_mumford-shah}]
By Lemmas \ref{thm:imagesAndFluxes}, \ref{thm:supportReduction}, \ref{thm:equivalenceDiscrete}, and \ref{thm:lsc} we have
\begin{align*}
\urbPlXia^{\varepsilon,a}(\flux)
&=\inf\{\liminf_{n\to\infty}\urbPlXia^{\varepsilon,a}(G_n) \ : \ (\flux_{G_n},\mu_+^n,\mu_-^n) \weakstarto (\flux,\mu_+,\mu_-),\,\mu_+^n,\mu_-^n\text{ satisfy \eqref{eqn:measureSpt}}\}\\
&=\inf\{\liminf_{n\to\infty}\urbPlImg^{\varepsilon,a}(u_{\flux_{G_n}}) \ : \ (\flux_{G_n},\mu_+^n,\mu_-^n) \weakstarto (\flux,\mu_+,\mu_-),\,u_{\flux_{G_n}}\in\mathcal{A}_u(\mu_+^n,\mu_-^n)\}\\
&=\inf\{\liminf_{n\to\infty}\urbPlImg^{\varepsilon,a}(u_n) \ : \ (Du_n^\perp,\mu_+^n,\mu_-^n) \weakstarto (Du_\flux^\perp,\mu_+,\mu_-),\,u_n\in D\cap\mathcal{A}_u(\mu_+^n,\mu_-^n)\}\\
&=\inf\{\liminf_{n\to\infty}\urbPlImg^{\varepsilon,a}(u_n) \ : \ u_n \weakstarto u_\flux\text{ in }\BV(V),\,u_n\in D\cap H\}\\
&\geq\urbPlImg^{\varepsilon,a}(u_\flux)
\,,
\end{align*}
the desired result. In particular, the first equality is due to Lemma \ref{thm:supportReduction}, the second to Lemma \ref{thm:equivalenceDiscrete}, the third to Lemma \ref{thm:imagesAndFluxes}, and the fourth to the definition of $\mathcal{A}_u(\mu_+,\mu_-)$ in \eqref{eq:definition_of_Au} and of $H$ in \eqref{eq:definition_of_H}. The final inequality is then due to the lower semi-continuity Lemma \ref{thm:lsc}.

The result for $\tilde\brTptEn^\varepsilon$ is obtained analogously.
\end{proof}

\subsection{Functional lifting of network optimization}\label{sec:lifting}
In a series of articles \cite{PCBC-2009b}, \cite{PCBC-2010} Pock, Cremers, Bischof, and Chambolle showed that the minimization of certain variational problems admits a convex reformulation via so-called functional lifting.
The underlying idea is based on \cite{Alberti-Bouchitte-DalMaso} and \cite{Cha01}, which consider functionals of the form
\begin{equation*}
\J[u]=\int_\Omega g(x,u(x),\nabla u(x))\,\de x+\int_{S_u}\psi(x,u^+,u^-,\nu)\,\de\hdone(x)\,,
\end{equation*}
where $u^+$ and $u^-$ are the function values of $u$ on either side of the discontinuity set $S_u$,
and $g:\Omega\times\R\times\R^2\to[0,\infty]$ and $\psi:\Omega\times\R\times\R\times S^1\to[0,\infty]$.
Introducing the characteristic function of the subgraph of the image $u$,
\begin{equation*}
1_u:\Omega\times\R\to\{0,1\},\,(x,s)\mapsto\begin{cases}1&\text{if }u(x)>s\,,\\0&\text{else,}\end{cases}
\end{equation*}
the authors show
\begin{align*}
\J[u]\geq&\sup_{\phi\in\mathcal K}\int_{\Omega\times\R}\phi\cdot\de D1_u\\
&\text{with }
\mathcal K=\bigg\{\phi=(\phi_x,\phi_s)\in\cont_0^\infty(\Omega\times\R;\R^2\times\R)\ :\\
&\qquad\qquad\quad\phi_s(x,s)\geq g^*(x,s,\phi_x(x,s))\quad\forall(x,s)\in\Omega\times\R\,,\\
&\textstyle\qquad\quad\qquad\left|\int_{s_1}^{s_2}\phi_x(x,s)\,\de s\right|\leq\psi(x,s_1,s_2,\nu)\quad\forall x\in\Omega,s_1<s_2,\nu\in S^1\bigg\}\,.
\end{align*}
Here, $g^*$ denotes the Legendre--Fenchel dual of $g$ with respect to its last argument.
In case of strong duality, even equality holds.
Consequently, instead of minimizing the potentially non-convex functional $\J[u]$ for $u$
one can find a solution to the saddle point problem $\min_{1_u}\sup_{\phi\in\mathcal K}\int_{\Omega\times\R}\phi\cdot\de D 1_u$.

Unfortunately, the set $\{v\!\in\!\BV(\Omega\!\times\!\R;\{0,1\})\ :\ v=1_u\text{ for some }u\!\in\!\BV(\Omega)\}$ of characteristic functions, over which one minimizes, is non-convex.
However, this set can be relaxed to the larger set
\begin{equation*}
\mathcal C=\left\{v\in\BV(\Omega\times\R;[0,1])\ :\
\lim_{s\to-\infty}v(x,s)=1\,,\;\lim_{s\to\infty}v(x,s)=0\right\}\,.
\end{equation*}
With this relaxation,
\begin{equation}\label{eq:convex_formulation}
\inf_{u\in\BV(\Omega)}\J[u]
\geq\inf_{v\in\mathcal C}\sup_{\phi\in\mathcal K}\int_{\Omega\times\R}\phi\cdot\de D v\,,
\end{equation}
where the right-hand side is a convex problem.

For our setting, given $\mu_+,\mu_-\in\fbm(\R^2)$ of equal mass and with \eqref{eqn:measureSpt},
let $1_{u(\mu_+,\mu_-)}$ be the characteristic function of the subgraph of the function defined in \eqref{eqn:bdyCondition}, and let us introduce the sets
\begin{align*}
\tilde{\mathcal C}&=\Big\{v\in\BV(V\times\R;[0,1])\ :\\
&\qquad\lim_{s\to-\infty}v(x,s)=1\,,\;\lim_{s\to\infty}v(x,s)=0\,,\;v=1_{u(\mu_+,\mu_-)}\text{ on }(V\setminus\overline\Omega)\times\R\Big\}\,,\\
\mathcal K_1&=\Big\{\phi=(\phi_x,\phi_s)\in\cont_0^\infty(V\times\R;\R^2\times\R)\,:\\
&\textstyle\qquad\phi_s\geq0,\,|\phi_x|\leq a,\,\left|\int_{s_1}^{s_2}\phi_x(x,s)\,\de s\right|\leq\min\{|s_2-s_1|+\varepsilon,a|s_2-s_1|\}\,\forall x\in V,\,s_1,s_2\in\R\Big\}\,,\\
\mathcal K_2&=\Big\{\phi=(\phi_x,\phi_s)\in\cont_0^\infty(V\times\R;\R^2\times\R)\,:\\
&\textstyle\qquad\phi_s\geq0,\,\left|\int_{s_1}^{s_2}\phi_x(x,s)\,\de s\right|\leq|s_2-s_1|^{1-\varepsilon}\,\forall x\in V,\,s_1,s_2\in\R\Big\}\,.
\end{align*}
Note that $\Kcal_1$ is $\Kcal$ for the choice corresponding to urban planning,
\begin{displaymath}
 g(x,u,p) = ap\,,\qquad \psi(x,u^+,u^-,\nu) = \min\{a|u^+-u^-|,|u^+-u^-|+\varepsilon\}\,,
\end{displaymath}
and that $\Kcal_2$ is $\Kcal$ for the choice corresponding to branched transport,
\begin{displaymath}
 g(x,u,p) = \iota_0(p)\,,\qquad \psi(x,u^+,u^-,\nu) = |u^+-u^-|^{1-\varepsilon}\,.
\end{displaymath}

Applying the previous discussion, we have
\begin{eqnarray*}
 \min_{\flux\in\mathcal A_\flux(\mu_+,\mu_-)}\urbPlEn^{\varepsilon,a,\mu_+,\mu_-}[\flux] \geq &\inf_{u\in\mathcal A_u(\mu_+,\mu_-)}\tilde\urbPlEn^{\varepsilon,a,\mu_+,\mu_-}[u] &\geq \inf_{v\in\tilde{\mathcal C}}\sup_{\phi\in\mathcal K_1}\int_{\overline\Omega\times\R}\phi\cdot\de Dv\,,\\
 \min_{\flux\in\mathcal A_\flux(\mu_+,\mu_-)}\brTptEn^{\varepsilon,\mu_+,\mu_-}[\flux] \geq &\inf_{u\in\mathcal A_u(\mu_+,\mu_-)}\tilde \brTptEn^{\varepsilon,a,\mu_+,\mu_-}[u] &\geq \inf_{v\in\tilde{\mathcal C}}\sup_{\phi\in\mathcal K_2}\int_{\overline\Omega\times\R}\phi\cdot\de Dv\,.
\end{eqnarray*}
The right-hand sides are convex optimization problems in $v$.
Furthermore, in some cases the optimal $v$ can be shown to have the form $1_u$ for the solution u of the generalised Mumford--Shah problem (\cite[Theorem 8.1]{Cha01} for the one-dimensional problem).
The optimal mass flux can then be readily read off as $Du^\perp$.

\subsection{Lower bound on network costs by convex duality}
As an application of the convex reformulation from the previous section
we finally obtain a proof of the lower bound in Theorem\,\ref{thm:scalingUrbPlan} just based on convex duality.
In particular, we show the following.
\begin{theorem}
There exist $c,\varepsilon_0>0$ such that for $n=2$ and $\varepsilon<\varepsilon_0$ we have
\begin{align*}
\min_\flux \urbPlEn^{\varepsilon,a,\mu_0,\mu_1}[\flux]-\urbPlEn^{*,a,\mu_0,\mu_1}&\geq c\ell\min\{\varepsilon^{\frac23},a-1\}\,,\\
\min_\flux \brTptEn^{\varepsilon,\mu_0,\mu_1}[\flux]-\brTptEn^{*,\mu_0,\mu_1}&\geq c\ell\varepsilon|\log\varepsilon|\,.
\end{align*}
\end{theorem}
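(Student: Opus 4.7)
The plan is to combine the convex lifting of Section \ref{sec:lifting} with weak duality, reducing both lower bounds to the construction of an appropriate calibration. From the chain of inequalities at the end of Section \ref{sec:lifting} and the generic $\inf\sup\geq\sup\inf$ estimate,
\begin{equation*}
\min_\flux\urbPlEn^{\varepsilon,a,\mu_0,\mu_1}[\flux]\geq\sup_{\phi\in\Kcal_1}\inf_{v\in\tilde{\mathcal C}}\int_{\overline\Omega\times\R}\phi\cdot\de Dv\,,
\end{equation*}
and analogously with $\Kcal_2$ for branched transport. Hence it suffices to exhibit $\phi\in\Kcal_1$ (resp.\ $\phi\in\Kcal_2$) whose inner infimum exceeds the baseline $\ell$ by $c\ell\min\{\varepsilon^{2/3},a-1\}$ (resp.\ by $c\ell\varepsilon|\log\varepsilon|$).

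Given any $\phi\in\cont_c^\infty(V\times\R;\R^2\times\R)$ that is divergence-free inside $\overline\Omega\times\R$, a double integration by parts, splitting $V\times\R$ and using the boundary prescription $v=1_{u(\mu_0,\mu_1)}$ on $(V\setminus\overline\Omega)\times\R$, produces the $v$-independent identity
\begin{equation*}
\int_{\overline\Omega\times\R}\phi\cdot\de Dv=\int_{\partial\Omega\times\R}1_{u(\mu_0,\mu_1)(x)}(s)\,\phi(x,s)\cdot n_\Omega(x)\,\de\hdone(x)\,\de s\,,
\end{equation*}
so that the inner infimum equals this boundary quantity. Since on $\Omega=(0,\ell)\times(0,1)$ the boundary datum $u(\mu_0,\mu_1)$ equals $x_1$ on the top and bottom edges, $\ell$ on the right, and $0$ on the left, the right hand side unfolds into four explicit edge integrals of the components of $\phi$. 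Parametrising $\phi$ via scalar potentials $\Psi,\Xi$ through $\phi_x=(\partial_s\Psi,\partial_s\Xi)$ and $\phi_s=-\partial_{x_1}\Psi-\partial_{x_2}\Xi$ automatically enforces divergence-freeness and recasts the membership $\phi\in\Kcal_i$ as pointwise and nonlocal $s$-constraints on $(\Psi,\Xi)$, together with the sign condition $\partial_{x_1}\Psi+\partial_{x_2}\Xi\leq 0$. The inner infimum then becomes a linear functional of $(\Psi,\Xi)$ to be maximised subject to these constraints.

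The main obstacle is the explicit construction of $(\Psi,\Xi)$ that saturates these constraints enough to reach the claimed rates. In the easy regime $a-1\lesssim\varepsilon/\ell$, the one-variable choice $\Psi(x,s)=as$ (smoothly cut off outside $[0,\ell]$) with $\Xi\equiv 0$ already satisfies the $\Kcal_1$ bound $|\Psi(x,s_2)-\Psi(x,s_1)|\leq\min\{|s_2-s_1|+\varepsilon,a|s_2-s_1|\}$ throughout and delivers improvement of order $\ell(a-1)$. In the delicate regime $a-1\gg\varepsilon^{2/3}$, a calibration constant in $x$ cannot beat the trivial gain $\varepsilon<\ell\varepsilon^{2/3}$ (using $\varepsilon<\ell^3$), and one is forced to exhibit $(\Psi,\Xi)$ with genuine multiscale $x$-dependence, essentially dual to the hierarchical elementary-cell structure of near-optimal primal networks from \cite{BrWi15-micropatterns}: one partitions $\Omega$ into strips whose width is tuned to the natural length scale associated to $\varepsilon$ and $a$, allows $\partial_s\Psi$ to reach the value $a$ on each strip over a short $s$-interval, and uses $\Xi\not\equiv 0$ to couple neighbouring strips without breaking the sign condition; a careful balance then yields the $\ell\varepsilon^{2/3}$ improvement. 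An analogous construction on $\sim|\log\varepsilon|$ dyadic scales, adapted to the concavity of $t\mapsto t^{1-\varepsilon}$ in place of $\min\{t+\varepsilon,at\}$, produces the $\ell\varepsilon|\log\varepsilon|$ bound for branched transport. The technical heart is the simultaneous verification of the nonlocal $s$-constraint over all pairs $(s_1,s_2)$, which requires a careful dyadic accounting.
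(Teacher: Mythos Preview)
Your overall strategy matches the paper's: swap $\inf$ and $\sup$ in the lifted formulation, reduce to constructing a divergence-free calibration $\phi$ whose boundary pairing exceeds $\ell$ by the claimed amount, and verify the nonlocal $s$-constraint defining $\Kcal_i$. Where you diverge is in the construction itself, and here you substantially overestimate the difficulty.

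You anticipate that in the regime $a-1\gg\varepsilon^{2/3}$ one needs a genuinely multiscale calibration, dual to the hierarchical primal construction, with $\Omega$ partitioned into strips and dyadic accounting over $\sim|\log\varepsilon|$ scales for branched transport. The paper shows this is not necessary: a single smooth, one-scale test field suffices for both models. Concretely, the paper takes the unit clockwise circular flow $\theta(y)=y^\perp/|y|$ on the half-disc, stretches it horizontally by a factor $2\beta_i$ to an elliptical flow $\varphi_i$, and sets $\phi^i_x(x,s)=\varphi_i(x_1-s,x_2)$ (reflected across $x_2=\tfrac12$), with $\phi^i_s\equiv0$. This is automatically divergence-free, and a direct computation gives the boundary value $\beta_i\ell$. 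The entire work then reduces to checking, by elementary calculus on the explicit formula $|\phi^i(x,s)|=\sqrt{(\beta_i^2+(\hat s/4x_2\beta_i)^2)/(1+(\hat s/2x_2\beta_i)^2)}$ with $\hat s=s-x_1$, that the choices $\beta_1=\min(1+c\varepsilon^{2/3},a)$ and $\beta_2=1+\tfrac12\varepsilon|\log\varepsilon|$ satisfy the respective $\Kcal_i$ constraints. The verification hinges on the simple observation that $|\phi^i|\leq\beta_i$ on a short interval $|\hat s|\leq\bar s_i\sim\sqrt{\beta_i-1}$ and $|\phi^i|\leq1$ outside it; integrating this two-level bound already gives the urban planning estimate, and a slightly finer pointwise bound $|\phi^2|\leq(1-\varepsilon)|2\hat s|^{-\varepsilon}$ for $|\hat s|\geq\bar s_2$ handles branched transport.

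So your plan has the right skeleton but proposes to fill in the hardest step with machinery that is both unnecessary and not actually carried out. The ``careful dyadic accounting'' you allude to is precisely what the paper's single-ellipse construction circumvents; if you were to pursue your route you would still owe a complete construction and verification, whereas the paper's choice reduces everything to a page of explicit estimates.
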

\begin{proof}
Here we choose $\Omega=(0,\ell)\times(0,1)$ and $V=B_1(\Omega)$.
Note that for $\mu_0,\mu_1$ from \eqref{eqn:sourceSink} we have $u(\mu_0,\mu_1)(x)=x_1$ for $x\in\partial\Omega$.
Swapping the infimum and supremum in the convex formulation we obtain
\begin{align*}
\hspace*{10ex}&\hspace*{-10ex}\min_{\flux\in\mathcal A_\flux(\mu_0,\mu_1)}\urbPlEn^{\varepsilon,a,\mu_0,\mu_1}[\flux]
\geq\sup_{\phi\in\mathcal K_1}\inf_{v\in\tilde{\mathcal C}}\int_{\overline\Omega\times\R}\phi\cdot\de Dv\\
&=\sup_{\phi\in\mathcal K_1}\inf_{v\in\tilde{\mathcal C}}\int_{P}\phi\cdot\nu\,\de\hd^2(x,s)-\int_{\Omega\times\R}v\dv\phi\,\de x\,\de s\\
&=\sup_{\phi\in\mathcal K_1}\int_P\phi\cdot\nu\,\de\hd^2(x,s)-\int_{\Omega\times\R}\max(0,\dv\phi)\,\de x\,\de s,
\end{align*}
where $P=\{(x,s)\in\partial \Omega \times \R\,:\,1_{u(\mu_0,\mu_1)}(x,s)=1\}=\{(x,s)\in\partial \Omega \times \R\,:\,x_1\geq s\}$,
$\nu$ is the unit outward normal to $P$, as well as
\begin{equation*}
\min_{\flux\in\mathcal A_\flux(\mu_0,\mu_1)}\brTptEn^{\varepsilon,\mu_0,\mu_1}[\flux]
\geq\sup_{\phi\in\mathcal K_2}\int_P\phi\cdot\nu\,\de\hd^2(x,s)-\int_{\Omega\times\R}\max(0,\dv\phi)\,\de x\,\de s\,.
\end{equation*}

All that remains to do is to construct a proper test function $\phi$ to be used in the above convex duality estimate.
Define the clockwise circular flow
\begin{equation*}
\theta(x_1,x_2)=\tfrac1r\left(\begin{smallmatrix}x_2\\-x_1\end{smallmatrix}\right)\quad\text{if }r=\left|\left(\begin{smallmatrix}x_2\\-x_1\end{smallmatrix}\right)\right|\leq\tfrac12
\text{ and }\theta(x_1,x_2)=0\text{ else}\,.
\end{equation*}
Obviously, $\dv\theta=0$ in the sense of distributions.
Now we stretch this circular flow horizontally by a factor $2\beta_i>2$ (which will be specified later), $i=1,2$, and divide by two,
\begin{equation*}
\varphi_i(x)=\tfrac12B_i\theta(B_i^{-1}x)\qquad\text{for }B_i=\left(\begin{smallmatrix}2\beta_i&0\\0&1\end{smallmatrix}\right)\,.
\end{equation*}
It is readily checked that also $\dv\varphi_i=0$.
Now define $\phi^i=(\phi^i_x,0)$ as (cf.\,Figure\,\ref{fig:testfield})
\begin{equation*}
\phi^i_x(x,s)=\begin{cases}0&\text{if }s\notin[0,\ell]\,,\\
\varphi_i(x_1-s,x_2)&\text{if }x_2\leq\frac12,\,s\in[0,\ell]\,,\\
-\varphi_i(x_1-s,1-x_2)&\text{else.}\end{cases}
\end{equation*}
Denoting the unit outward normal to $\partial\Omega$ by $n$ and using $\dv\phi^i_x(\cdot,s)=0$ as well as $\phi^i_x((s,x_2),s)=\varphi_i(0,x_2)=\genfrac{(}{)}{0pt}{}{\beta_i}{0}$ for $x_2\in[0,1]$ we can calculate
\begin{multline*}
\int_P\phi^i\cdot\nu\,\de\hd^2(x,s)
=\int_0^\ell \int_{\{x\in\partial\Omega\,:\,x_1\geq s\}}\phi^i_x\cdot n \,\de\hdone(x)\,\de s\\
=\int_0^\ell\left( \int_{[s,\ell]\times[0,1]}\dv\phi^i_x(\cdot,s)\,\de x-\int_{\{s\}\times[0,1]}\phi^i_x(x,s)\cdot\genfrac{(}{)}{0pt}{1}{-1}{0} \,\de\hdone(x)\right)\,\de s
=\beta_i\ell\,,
\end{multline*}
where in the second step we used the divergence theorem.
Now the test functions $\phi^1,\phi^2$ can be approximated arbitrarily well by divergence-free functions in $\cont_0^\infty(V\times\R;\R^2\times\R)$ so that the above convex duality estimates imply
\begin{align*}
\min_{\flux\in\mathcal A_\flux(\mu_0,\mu_1)}\urbPlEn^{\varepsilon,a,\mu_0,\mu_1}[\flux]
&\geq\int_P\phi^1\cdot\nu\,\de\hd^2(x,s)=\beta_1\ell=\urbPlEn^{*,a,\mu_0,\mu_1}+\ell(\beta_1-1)\,,\\
\min_{\flux\in\mathcal A_\flux(\mu_0,\mu_1)}\brTptEn^{\varepsilon,\mu_0,\mu_1}[\flux]
&\geq\int_P\phi^2\cdot\nu\,\de\hd^2(x,s)=\beta_2\ell=\brTptEn^{*,\mu_0,\mu_1}+\ell(\beta_2-1)\,,
\end{align*}
where $\beta_1,\beta_2$ will be chosen such that
\begin{align*}
\left|\int_{s_1}^{s_2}\phi^1_x(x,s)\,\de s\right|&\leq\min\{|s_2-s_1|+\varepsilon,a|s_2-s_1|\}\quad\forall x\in V, s_1,s_2\in\R\,,\\
\left|\int_{s_1}^{s_2}\phi^2_x(x,s)\,\de s\right|&\leq|s_2-s_1|^{1-\varepsilon}\quad\forall x\in V, s_1,s_2\in\R
\end{align*}
as well as $\beta_1\leq a$ in order to satisfy $|\phi^1_x|\leq a$.
We are going to show that $\beta_1=\min(1+c\varepsilon^{\frac23},a)$ and $\beta_2=1+c\varepsilon|\log\varepsilon|$ are admissible choices for some $c>0$, which concludes the proof.

\begin{figure}
\centering
\setlength{\unitlength}{1.3\linewidth}
\begin{picture}(.4,.13)
\put(.02,.02){\includegraphics[width=.4\unitlength,trim=0 120 0 120,clip]{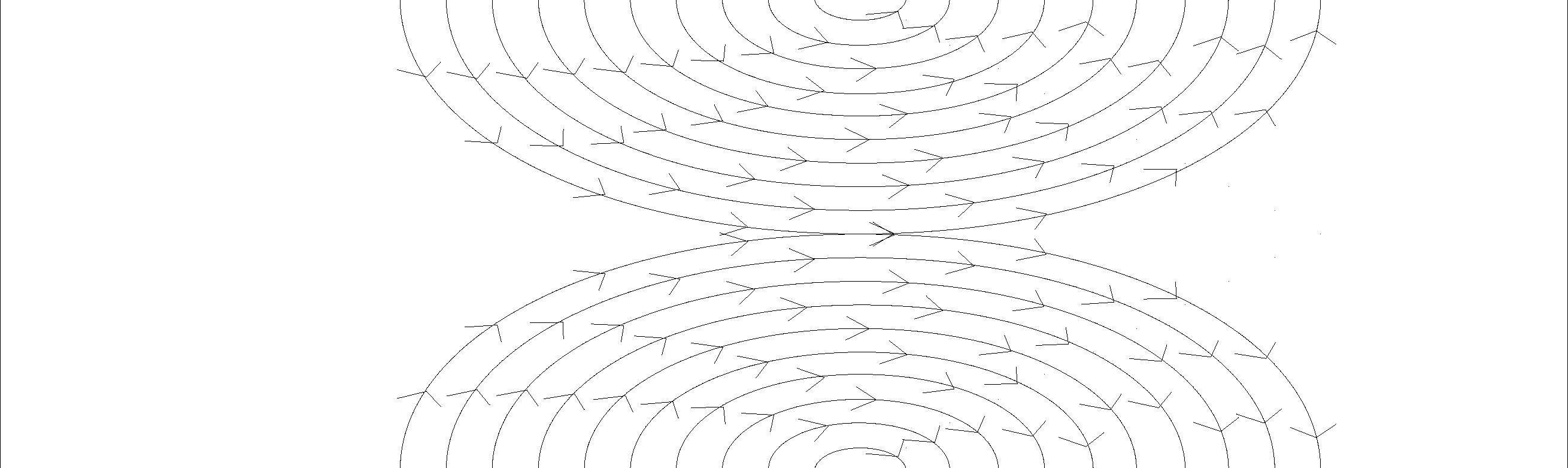}}
\put(.02,.02){\line(1,0){.4}}
\put(.02,.14){\line(1,0){.4}}
\put(.02,.02){\line(0,1){.12}}
\put(.42,.02){\line(0,1){.12}}
\put(.12,0){\vector(-1,0){.1}}
\put(.12,0){\vector(1,0){.3}}
\put(.22,-.02){$\ell$}
\put(.23,.005){$s$}
\put(0,.07){\vector(0,-1){.05}}
\put(0,.07){\vector(0,1){.07}}
\put(-.015,.07){$1$}
\put(.15,.15){\vector(-1,0){.03}}
\put(.15,.15){\vector(1,0){.09}}
\put(.17,.155){$\beta_i$}
\put(.27,.15){\vector(-1,0){.03}}
\put(.27,.15){\vector(1,0){.09}}
\put(.3,.155){$\beta_i$}
\end{picture}
\caption{Sketch of the flow $\phi^i$ in cross-section $\Omega \times \{s\}$.}
\label{fig:testfield}
\end{figure}

Fix a position $x=(x_1,x_2)$, where due to symmetry it suffices to consider $x_2\in[0,\frac12]$.
Let us abbreviate $\hat s=s-x_1$ and $r_i(\hat s)=|B_i^{-1}\genfrac(){0pt}{}{\hat s}{x_2}| = \sqrt{x_2^2+(\frac{\hat s}{2\beta_i})^2}$.
We have
\begin{equation*}
|\phi^i(x,s)|
=|\varphi_i(-\hat s,x_2)|
=\tfrac1{r_i(\hat s)}\sqrt{\beta_i^2x_2^2+(\tfrac{\hat s}{4\beta_i})^2}
=\sqrt{\tfrac{\beta_i^2+(\hat s/(4x_2\beta_i))^2}{1+(\hat s/(2x_2\beta_i))^2}}
\end{equation*}
where $\phi^i$ is nonzero.
Note that for $\bar s_i=4x_2\beta_i\sqrt{(\beta_i^2-1)/3}$,
\begin{equation*}
|\phi^i(x,s)|\leq\begin{cases}\beta_i&\text{if }\hat s\in[-\bar s_i,\bar s_i]\,,\\1&\text{else,}\end{cases}
\end{equation*}
thus in particular $|\phi^i(x,s)|\leq\beta_i$.

We first consider the case $i=1$, in which we have (assuming $s_2\geq s_1$ without loss of generality)
\begin{multline*}
\left|\int_{s_1}^{s_2}\phi^1(x,s)\,\de s\right|-|s_2-s_1|
\leq\int_{s_1}^{s_2}|\phi^1(x,s)|-1\,\de s\\
\leq2\bar s_1(\beta_1-1)
\leq4\beta_1\sqrt{\tfrac{\beta_1^2-1}3}(\beta_1-1)
\leq4\sqrt{\tfrac{\beta_1^2-1}3}(\beta_1^2-1)\,,
\end{multline*}
which is indeed smaller than $\varepsilon$, as required, if we choose $\beta_1=\min(1+\frac14(\frac{3}2)^{\frac23}\varepsilon^{\frac23},a)$.
Likewise, this choice of $\beta_1$ also satisfies
\begin{equation*}
\left|\int_{s_1}^{s_2}\phi^1(x,s)\,\de s\right|-a|s_2-s_1|
\leq2\bar s_1(\beta_1-a)\leq0
\end{equation*}
as required.

For $i=2$, assume first $\hat s_1,\hat s_2\in[-\bar s_i,\bar s_i]$. In that case,
\begin{equation*}
\left|\int_{s_1}^{s_2}\phi^2(x,s)\,\de s\right|/|s_2-s_1|
\leq\int_{s_1}^{s_2}|\phi^2(x,s)|\,\de s/|s_2-s_1|
\leq\beta_2\,.
\end{equation*}
Choosing $\beta_2=1+\frac{\varepsilon|\log\varepsilon|}2$, this is smaller than $(2\bar s_2)^{-\varepsilon}\leq|s_2-s_1|^{-\varepsilon}$, as required.
Indeed, for $\varepsilon\leq1$, $\beta_2=1+\frac{\varepsilon|\log\varepsilon|}2\leq\sqrt\varepsilon^{-\varepsilon}\leq\big(4\sqrt{\frac{\varepsilon|\log\varepsilon|}3}\big)^{-\varepsilon}\leq(2\bar s_2)^{-\varepsilon}$.
It remains to show $\left|\int_{s_1}^{s_2}\phi(x,s)\,\de s\right|\leq|s_2-s_1|^{1-\varepsilon}$ for $\hat s_1$ or $\hat s_2$ outside $[-\bar s_2,\bar s_2]$.
In the next paragraph we will show
\begin{equation}\label{eq:postponed_inequality}
 |\phi^2(x,s)|\leq(1-\varepsilon)|2\hat s|^{-\varepsilon} \quad \text{for $\hat s\in[-\beta_2,\beta_2]\setminus[-\bar s_2,\bar s_2]$},
\end{equation}
which then yields for a given $\Delta s=|s_2-s_1|$ (exploiting that $|\phi^2|$ decreases to both sides of $\hat s=0$)
\begin{multline*}
\left|\int_{s_1}^{s_2}\phi^2(x,s)\,\de s\right|
\leq\int_{s_1}^{s_2}|\phi^2(x,s)|\,\de s
\leq\int_{x_1-\frac{\Delta s}2}^{x_1+\frac{\Delta s}2}|\phi^2(x,s)|\,\de s\\
\leq\begin{cases}
(\Delta s)^{1-\varepsilon}&\text{if }\frac{\Delta s}2\leq\bar s_2\,,\\
(2\bar s_2)^{1-\varepsilon}+2\int_{\bar s_2}^{\frac{\Delta s}2}(1-\varepsilon)|2\hat s|^{-\varepsilon}\,\de\hat s
=(\Delta s)^{1-\varepsilon}&\text{else,}
\end{cases}
\end{multline*}
as required.

To prove inequality \eqref{eq:postponed_inequality},
let us abbreviate $t=\frac{(1-\varepsilon)^{1/\varepsilon}}2\approx\frac1{2e}$.
Obviously, $$(1-\varepsilon)|2\hat s|^{-\varepsilon}=|\tfrac t{\hat s}|^{-\varepsilon}\geq1\geq|\phi^2(x,s)|\quad\text{ for }\hat s\in[-t,t]\setminus[-\bar s_2,\bar s_2]\,,$$
while for $\hat s\in[-\beta_2,\beta_2]\setminus[t,t]$ we have
\begin{equation*}
|\phi^2(x,s)|
\leq\sqrt{\tfrac{\beta_2^4+t^2/4}{\beta_2^2+t^2}}
\leq(1-\varepsilon)|2\beta_2|^{-\varepsilon}
\leq(1-\varepsilon)|2\hat s|^{-\varepsilon}\,,
\end{equation*}
where the middle inequality holds for $\varepsilon$ small enough due to $\sqrt{\tfrac{\beta_2^4+t^2/4}{\beta_2^2+t^2}}\to\sqrt{\frac{(4e)^2+1}{(4e)^2+4}}<1$ and $(1-\varepsilon)|2\beta_2|^{-\varepsilon}\to1$ as ${\varepsilon\to0}$.
\end{proof}

\subsection{Numerical optimization of urban planning and branched transport networks}
The proposed convex reformulation of the branched transport and urban planning model as variants of the Mumford--Shah segmentation problem allows a numerical network optimization that cannot get stuck in local minima due to the convexity.
We shall use a finite difference discretization of the right-hand side in \eqref{eq:convex_formulation} (similarly to \cite{PCBC-2009}) and apply a simple primal-dual algorithm to
numerically find the saddle point.

In the following, for simplicity we assume without loss of generality a rectangular domain $V\subset\R^2$ with bottom left corner at the origin.
We discretize the domain $V\times\mathbb{R}$ of the lifted function $v$ by a finite three-dimensional $n\times m\times p$ grid
\begin{equation*}
\mathcal{G} = \{(ih_1,jh_2,lh_s): \ i=0,\ldots,n, \ j=0,\ldots,m, \ l=0,\ldots,p\}\,,
\end{equation*}
where $h_1,h_2,h_s>0$ denote the grid size in each direction. Hence, the discrete counterparts of $v\in\BV(V\times\mathbb{R};[0,1])$ and $\phi\in C_0^{\infty}(V\times\mathbb{R};\mathbb{R}^2\times\mathbb{R})$
are given by $v^h:\mathcal{G}\rightarrow[0,1]$ and $\phi^h:\mathcal{G}\rightarrow\mathbb{R}^2\times\mathbb{R}$. For the sake of simplicity, for every $(ih_1,jh_2,lh_s)\in\mathcal{G}$, we write
$v_{ijl}^h=v^h(ih_1,jh_2,lh_s)$ and $\phi_{ijl}^h=\phi^h(ih_1,jh_2,lh_s)$. Defining the finite difference gradient operator $D=(D_1,D_2,D_s)^T$ by
\begin{equation*}
(D_1v^h)_{ijl} = \tfrac{v_{i+1,j,l}^h-v_{i,j,l}^h}{h_1}\,,\quad
(D_2v^h)_{ijl} = \tfrac{v_{i,j+1,l}^h-v_{i,j,l}^h}{h_2}\,,\quad
(D_sv^h)_{ijl} = \tfrac{v_{i,j,l+1}^h-v_{i,j,l}^h}{h_s}\,,
\end{equation*}
the discretized form of saddle point problem \eqref{eq:convex_formulation} reads
\begin{equation*}
\min_{v^h\in \mathcal{C}^h}\max_{\phi^h\in \mathcal{K}^h} \sum_{i,j,l}\phi_{ijl}^h(Dv^h)_{ijl}\,,
\end{equation*}
in which the discrete versions of the convex sets $\tilde{\mathcal{C}}$ and $\mathcal{K}=\mathcal K_1$ (for urban planning) or $\mathcal K_2$ (for branched transport) are given by
\begin{align*}
\mathcal{C}^h&=\Big\{v^h:\mathcal{G}\rightarrow[0,1]:\ v_{ij0}^h=1, \ v_{ijp}^h=0 \ \forall i,j, \ v^h=1_{u(\mu_+,\mu_-)}^h\text{ on }\mathcal{G}\setminus(\overline\Omega\times\R)\Big\}\,,\\
\mathcal{K}^h_1&=\Big\{\phi^h=(\phi^h_x,\phi^h_s):\mathcal{G}\rightarrow\mathbb{R}^2\times\mathbb{R}:\\
&\textstyle\qquad \phi^h_s\geq 0, \ |\phi^h_x|\leq a, \ |h_s\sum_{l=s_1}^{s_2}(\phi^h_x)_{ijl}| \leq \min\{|s_2-s_1|+\varepsilon, a|s_2-s_1|\} \ \forall i,j,s_1<s_2 \Big\}\,,\\
\mathcal{K}^h_2&\textstyle=\Big\{\phi^h=(\phi^h_x,\phi^h_s):\mathcal{G}\rightarrow\mathbb{R}^2\times\mathbb{R}:\,\phi^h_s\geq 0, \ |h_s\sum_{l=s_1}^{s_2}(\phi^h_x)_{ijl}| \leq |s_2-s_1|^{1-\varepsilon} \ \forall i,j,s_1<s_2 \Big\}\,.
\end{align*}
Above, $1_{u(\mu_+,\mu_-)}^h$ denotes the restriction of $1_{u(\mu_+,\mu_-)}$ onto the grid $\mathcal{G}$.

The discretized saddle point problem is solved using the primal-dual algorithm from \cite{ChPo10}, which alternatingly performs a primal gradient descent and a dual gradient ascent step with step length $\tau$ (respectively $\sigma$), accompanied by an overrelaxation with parameter $\theta$. Denoting the $k$\textsuperscript{th} approximation to $v^h$ and $\phi^h$ by $v^k$ and $\phi^k$, respectively, the iterative algorithm reads
\begin{equation*}
\begin{cases}
v^{k+1} = P_{\mathcal{C}^h}(v^k-\tau D^*\phi^k)\,, \\
\overline{v}^{k+1} = v^{k+1}+\theta(v^{k+1}-v^k)\,, \\
\phi^{k+1} = P_{\mathcal{K}^h}(\phi^k+\sigma D\overline{v}^{k+1})\,,
\end{cases}
\end{equation*}
starting from an arbitrary initialization $v^0$, $\phi^0$.
The orthogonal projection $P_{\mathcal{K}^h}$ onto $\mathcal{K}^h$ is performed by Dykstra's method \cite{BoDy86},
while the projection $P_{\mathcal{C}^h}$ onto the set $\mathcal{C}^h$ is straightforward.
When $v^k$ has sufficiently converged, it is typically close to binary. The rounding of $v^k$ to $\{0,1\}$ is then interpreted as the sought function $1_u$, and the optimal flux is identified as $Du^\perp$.

% Energy plot Urban Planning
\begin{figure}
\centering
\setlength{\unitlength}{\linewidth}
\begin{picture}(1.2,.37)(.015,0)
\put(-.08,.002){\includegraphics[width=1.18\textwidth]{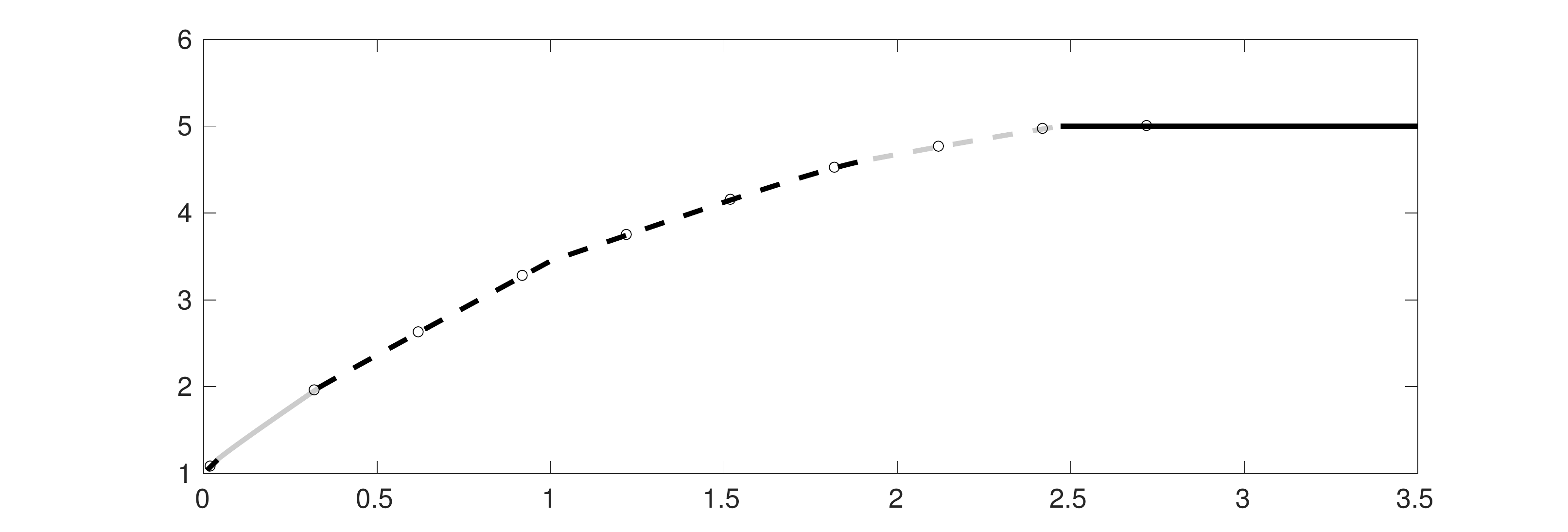}}
\put(.52,.002){\large $\varepsilon$}
\put(.03,.14){\begin{rotate}{90} \large $\underset{\flux}{\min}\ E^{\varepsilon,a}(\flux)$ \end{rotate}}
\put(.076,.073){\circlearound{1}}
\put(.144,.126){\circlearound{2}}
\put(.22,.17){\circlearound{3}}
\put(.3,.215){\circlearound{4}}
\put(.38,.245){\circlearound{5}}
\put(.458,.27){\circlearound{6}}
\put(.535,.295){\circlearound{7}}
\put(.614,.312){\circlearound{8}}
\put(.694,.325){\circlearound{9}}
\put(.772,.33){\tikz[baseline]\node[draw,shape=circle,scale=0.5,anchor=base]{10};}

\put(.62,.065){
\begin{tikzpicture}[scale=.7]\tikzset{every node/.style={inner sep=2pt]}}
  \filldraw (0.15,2.75) circle (1.5pt) node[align=left, above] {\tiny +} -- (0.15,1.4) circle (1.5pt) node[align=left, below] {-};
  \filldraw (0.55,2.75) circle (1.5pt) node[align=left, above] {\tiny +} -- (0.55,1.4) circle (1.5pt) node[align=left, below] {-};
  \filldraw (0.95,2.75) circle (1.5pt) node[align=left, above] {\tiny +} -- (0.95,1.4) circle (1.5pt) node[align=left, below] {-};
  \filldraw (1.35,2.75) circle (1.5pt) node[align=left, above] {\tiny +} -- (1.35,1.4) circle (1.5pt) node[align=left, below] {-};
  \draw [line width=1.7pt] (.11,0.95) -- (1.39,0.95);
\end{tikzpicture}
}
\put(.71,.065){
\begin{tikzpicture}[scale=.7]\tikzset{every node/.style={inner sep=2pt]}}
  \filldraw (0.15,2.75) circle (1.5pt) node[align=left, above] {\tiny +} -- (0.35,2.4);
  \filldraw (0.55,2.75) circle (1.5pt) node[align=left, above] {\tiny +} -- (0.35,2.4);
  \draw (0.35,2.4) -- (0.35,1.75);
  \filldraw (0.15,1.4) circle (1.5pt) node[align=left, below] {-} -- (0.35,1.75);
  \filldraw (0.55,1.4) circle (1.5pt) node[align=left, below] {-} -- (0.35,1.75);
  \filldraw (0.95,2.75) circle (1.5pt) node[align=left, above] {\tiny +} -- (1.15,2.4);
  \filldraw (1.35,2.75) circle (1.5pt) node[align=left, above] {\tiny +} -- (1.15,2.4);
  \draw (1.15,2.4) -- (1.15,1.75);
  \filldraw (0.95,1.4) circle (1.5pt) node[align=left, below] {-} -- (1.15,1.75);
  \filldraw (1.35,1.4) circle (1.5pt) node[align=left, below] {-} -- (1.15,1.75);
  \draw [lightgray,line width=1.7pt] (.11,0.95) -- (1.39,0.95);
\end{tikzpicture}
}
\put(.8,.065){
\begin{tikzpicture}[scale=.7]\tikzset{every node/.style={inner sep=2pt]}}
  \filldraw (0.15,2.75) circle (1.5pt) node[align=left, above] {\tiny +};
  \filldraw (0.55,2.75) circle (1.5pt) node[align=left, above] {\tiny +};
  \filldraw (0.95,2.75) circle (1.5pt) node[align=left, above] {\tiny +};
  \filldraw (1.35,2.75) circle (1.5pt) node[align=left, above] {\tiny +};
  \filldraw (0.15,1.4) circle (1.5pt) node[align=left, below] {-};
  \filldraw (0.55,1.4) circle (1.5pt) node[align=left, below] {-};
  \filldraw (0.95,1.4) circle (1.5pt) node[align=left, below] {-};
  \filldraw (1.35,1.4) circle (1.5pt) node[align=left, below] {-};
  \draw (.15,2.75) -- (.35,2.6); 
  \draw (.55,2.75) -- (.35,2.6);
  \draw (.95,2.75) -- (1.15,2.6);
  \draw (1.35,2.75) -- (1.15,2.6);
  \draw (.15,1.4) -- (.35,1.55); 
  \draw (.55,1.4) -- (.35,1.55);
  \draw (.95,1.4) -- (1.15,1.55);
  \draw (1.35,1.4) -- (1.15,1.55);
  \draw (.75,2.3) -- (.75,1.85);
  \draw (.35,2.6) -- (.75,2.3);
  \draw (1.15,2.6) -- (.75,2.3);
  \draw (.35,1.55) -- (.75,1.85);
  \draw (1.15,1.55) -- (.75,1.85);  
  \draw [dashed,line width=1.7pt] (.105,0.95) -- (1.45,0.95);
\end{tikzpicture}
}
\put(.89,.065){
\begin{tikzpicture}[scale=.7]\tikzset{every node/.style={inner sep=2pt]}}
  \filldraw (0.15,2.75) circle (1.5pt) node[align=left, above] {\tiny +} -- (.75,2.3);
  \filldraw (0.55,2.75) circle (1.5pt) node[align=left, above] {\tiny +} -- (.75,2.3);
  \filldraw (0.95,2.75) circle (1.5pt) node[align=left, above] {\tiny +} -- (.75,2.3);
  \filldraw (1.35,2.75) circle (1.5pt) node[align=left, above] {\tiny +} -- (.75,2.3);
  \filldraw (0.15,1.4) circle (1.5pt) node[align=left, below] {-} -- (.75,1.85);
  \filldraw (0.55,1.4) circle (1.5pt) node[align=left, below] {-} -- (.75,1.85);
  \filldraw (0.95,1.4) circle (1.5pt) node[align=left, below] {-} -- (.75,1.85);
  \filldraw (1.35,1.4) circle (1.5pt) node[align=left, below] {-} -- (.75,1.85);
  \draw (.75,2.3) -- (.75,1.85);
  \draw [dashed,lightgray,line width=1.7pt] (.105,0.95) -- (1.45,0.95);
\end{tikzpicture}
}
\put(.62,.055){\line(1,0){.36}}
\put(.62,.175){\line(1,0){.36}}
\put(.62,.055){\line(0,1){.12}}
\put(.98,.055){\line(0,1){.12}}

\end{picture}
\setlength{\unitlength}{.8\linewidth}
\begin{picture}(.96,.47)
\put(.01,.26){\includegraphics[width=0.18\unitlength]{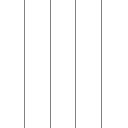}} \put(.01,.43){\circlearound{1}} \put(.06,.235){\small $\varepsilon=0.02$}
\put(.044,.44){\circle*{0.008}}
\put(.081,.44){\circle*{0.008}}
\put(.116,.44){\circle*{0.008}}
\put(.153,.44){\circle*{0.008}}
\put(.044,.26){\circle*{0.008}}
\put(.081,.26){\circle*{0.008}}
\put(.116,.26){\circle*{0.008}}
\put(.153,.26){\circle*{0.008}}
\put(.2,.26){\includegraphics[width=0.18\unitlength]{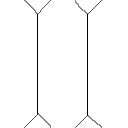}} \put(.2,.43){\circlearound{2}} \put(.25,.235){\small $\varepsilon=0.32$}
\put(.234,.44){\circle*{0.008}}
\put(.271,.44){\circle*{0.008}}
\put(.306,.44){\circle*{0.008}}
\put(.343,.44){\circle*{0.008}}
\put(.234,.26){\circle*{0.008}}
\put(.271,.26){\circle*{0.008}}
\put(.306,.26){\circle*{0.008}}
\put(.343,.26){\circle*{0.008}}
\put(.39,.26){\includegraphics[width=0.18\unitlength]{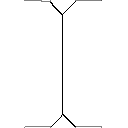}} \put(.39,.43){\circlearound{3}} \put(.44,.235){\small $\varepsilon=0.62$}
\put(.424,.44){\circle*{0.008}}
\put(.461,.44){\circle*{0.008}}
\put(.496,.44){\circle*{0.008}}
\put(.533,.44){\circle*{0.008}}
\put(.424,.26){\circle*{0.008}}
\put(.461,.26){\circle*{0.008}}
\put(.496,.26){\circle*{0.008}}
\put(.533,.26){\circle*{0.008}}
\put(.58,.26){\includegraphics[width=0.18\unitlength]{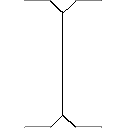}} \put(.58,.43){\circlearound{4}} \put(.63,.235){\small $\varepsilon=0.92$}
\put(.614,.44){\circle*{0.008}}
\put(.651,.44){\circle*{0.008}}
\put(.686,.44){\circle*{0.008}}
\put(.723,.44){\circle*{0.008}}
\put(.614,.26){\circle*{0.008}}
\put(.651,.26){\circle*{0.008}}
\put(.686,.26){\circle*{0.008}}
\put(.723,.26){\circle*{0.008}}
\put(.77,.26){\includegraphics[width=0.18\unitlength]{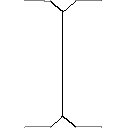}} \put(.77,.43){\circlearound{5}} \put(.82,.235){\small $\varepsilon=1.22$}
\put(.804,.44){\circle*{0.008}}
\put(.841,.44){\circle*{0.008}}
\put(.876,.44){\circle*{0.008}}
\put(.913,.44){\circle*{0.008}}
\put(.804,.26){\circle*{0.008}}
\put(.841,.26){\circle*{0.008}}
\put(.876,.26){\circle*{0.008}}
\put(.913,.26){\circle*{0.008}}
\put(.01,.03){\includegraphics[width=0.18\unitlength]{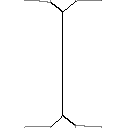}} \put(.01,.2){\circlearound{6}} \put(.06,0.005){\small $\varepsilon=1.52$}
\put(.044,.21){\circle*{0.008}}
\put(.081,.21){\circle*{0.008}}
\put(.116,.21){\circle*{0.008}}
\put(.153,.21){\circle*{0.008}}
\put(.044,.03){\circle*{0.008}}
\put(.081,.03){\circle*{0.008}}
\put(.116,.03){\circle*{0.008}}
\put(.153,.03){\circle*{0.008}}
\put(.2,.03){\includegraphics[width=0.18\unitlength]{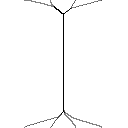}} \put(.2,.2){\circlearound{7}} \put(.25,0.005){\small $\varepsilon=1.82$}
\put(.234,.21){\circle*{0.008}}
\put(.271,.21){\circle*{0.008}}
\put(.306,.21){\circle*{0.008}}
\put(.343,.21){\circle*{0.008}}
\put(.234,.03){\circle*{0.008}}
\put(.271,.03){\circle*{0.008}}
\put(.306,.03){\circle*{0.008}}
\put(.343,.03){\circle*{0.008}}
\put(.39,.03){\includegraphics[width=0.18\unitlength]{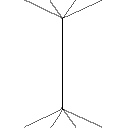}} \put(.39,.2){\circlearound{8}} \put(.44,0.005){\small $\varepsilon=2.12$}
\put(.424,.21){\circle*{0.008}}
\put(.461,.21){\circle*{0.008}}
\put(.496,.21){\circle*{0.008}}
\put(.533,.21){\circle*{0.008}}
\put(.424,.03){\circle*{0.008}}
\put(.461,.03){\circle*{0.008}}
\put(.496,.03){\circle*{0.008}}
\put(.533,.03){\circle*{0.008}}
\put(.58,.03){\includegraphics[width=0.18\unitlength]{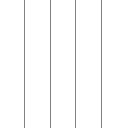}} \put(.58,.2){\circlearound{9}} \put(.63,0.005){\small $\varepsilon=2.42$}
\put(.614,.21){\circle*{0.008}} 
\put(.651,.21){\circle*{0.008}}
\put(.686,.21){\circle*{0.008}}
\put(.723,.21){\circle*{0.008}}
\put(.614,.03){\circle*{0.008}}
\put(.651,.03){\circle*{0.008}}
\put(.686,.03){\circle*{0.008}}
\put(.723,.03){\circle*{0.008}}
\put(.77,.03){\includegraphics[width=0.18\unitlength]{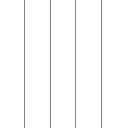}} \put(.77,.2){\tikz[baseline]\node[draw,shape=circle,scale=0.5,anchor=base]{10};} \put(.82,0.005){\small $\varepsilon=2.72$}
\put(.804,.21){\circle*{0.008}}
\put(.841,.21){\circle*{0.008}}
\put(.876,.21){\circle*{0.008}}
\put(.913,.21){\circle*{0.008}}
\put(.804,.03){\circle*{0.008}}
\put(.841,.03){\circle*{0.008}}
\put(.876,.03){\circle*{0.008}}
\put(.913,.03){\circle*{0.008}}
\end{picture}
\caption{Parameter study for urban planning.
Top: Plot of the manually computed minimal energy for different values of $\varepsilon$ and fixed $a=5$. The line type indicates the optimal network topology.
Bottom: Numerically computed optimal fluxes for evenly spaced values of $\varepsilon$ in the same range. The numerically obtained network topologies match the predicted ones except for example $\textcircled{\small 9}$.}
\label{fig:EnergyPlotUP}
\end{figure}

% Energy plot Branched Transport
\begin{figure}
\centering
\setlength{\unitlength}{\linewidth}
\begin{picture}(1.2,.37)(.015,0)
\put(-.08,.002){\includegraphics[width=1.18\textwidth]{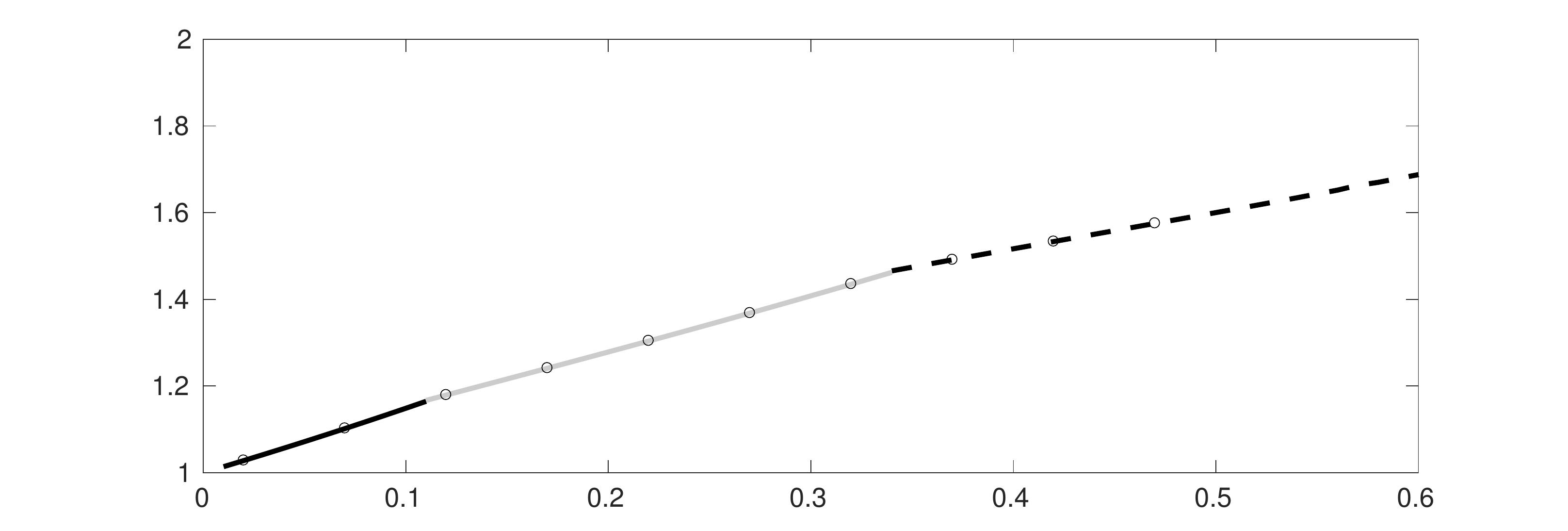}}
\put(.52,.002){\large $\varepsilon$}
\put(.01,.14){\begin{rotate}{90} \large $\underset{\flux}{\min}\ M^{\varepsilon}(\flux)$ \end{rotate}}
\put(.095,.255){
\begin{tikzpicture}[scale=.7]\tikzset{every node/.style={inner sep=2pt]}}
  \filldraw (0.15,2.75) circle (1.5pt) node[align=left, above] {\tiny +} -- (0.15,1.4) circle (1.5pt) node[align=left, below] {-};
  \filldraw (0.55,2.75) circle (1.5pt) node[align=left, above] {\tiny +} -- (0.55,1.4) circle (1.5pt) node[align=left, below] {-};
  \filldraw (0.95,2.75) circle (1.5pt) node[align=left, above] {\tiny +} -- (0.95,1.4) circle (1.5pt) node[align=left, below] {-};
  \filldraw (1.35,2.75) circle (1.5pt) node[align=left, above] {\tiny +} -- (1.35,1.4) circle (1.5pt) node[align=left, below] {-};
  \draw [line width=1.7pt] (.11,0.95) -- (1.39,0.95);
\end{tikzpicture}
}
\put(.185,.255){
\begin{tikzpicture}[scale=.7]\tikzset{every node/.style={inner sep=2pt]}}
  \filldraw (0.15,2.75) circle (1.5pt) node[align=left, above] {\tiny +} -- (0.35,2.4);
  \filldraw (0.55,2.75) circle (1.5pt) node[align=left, above] {\tiny +} -- (0.35,2.4);
  \draw (0.35,2.4) -- (0.35,1.75);
  \filldraw (0.15,1.4) circle (1.5pt) node[align=left, below] {-} -- (0.35,1.75);
  \filldraw (0.55,1.4) circle (1.5pt) node[align=left, below] {-} -- (0.35,1.75);
  \filldraw (0.95,2.75) circle (1.5pt) node[align=left, above] {\tiny +} -- (1.15,2.4);
  \filldraw (1.35,2.75) circle (1.5pt) node[align=left, above] {\tiny +} -- (1.15,2.4);
  \draw (1.15,2.4) -- (1.15,1.75);
  \filldraw (0.95,1.4) circle (1.5pt) node[align=left, below] {-} -- (1.15,1.75);
  \filldraw (1.35,1.4) circle (1.5pt) node[align=left, below] {-} -- (1.15,1.75);
  \draw [lightgray,line width=1.7pt] (.11,0.95) -- (1.39,0.95);
\end{tikzpicture}
}
\put(.275,.255){
\begin{tikzpicture}[scale=.7]\tikzset{every node/.style={inner sep=2pt]}}
  \filldraw (0.15,2.75) circle (1.5pt) node[align=left, above] {\tiny +};
  \filldraw (0.55,2.75) circle (1.5pt) node[align=left, above] {\tiny +};
  \filldraw (0.95,2.75) circle (1.5pt) node[align=left, above] {\tiny +};
  \filldraw (1.35,2.75) circle (1.5pt) node[align=left, above] {\tiny +};
  \filldraw (0.15,1.4) circle (1.5pt) node[align=left, below] {-};
  \filldraw (0.55,1.4) circle (1.5pt) node[align=left, below] {-};
  \filldraw (0.95,1.4) circle (1.5pt) node[align=left, below] {-};
  \filldraw (1.35,1.4) circle (1.5pt) node[align=left, below] {-};
  \draw (.15,2.75) -- (.35,2.6); 
  \draw (.55,2.75) -- (.35,2.6);
  \draw (.95,2.75) -- (1.15,2.6);
  \draw (1.35,2.75) -- (1.15,2.6);
  \draw (.15,1.4) -- (.35,1.55); 
  \draw (.55,1.4) -- (.35,1.55);
  \draw (.95,1.4) -- (1.15,1.55);
  \draw (1.35,1.4) -- (1.15,1.55);
  \draw (.75,2.3) -- (.75,1.85);
  \draw (.35,2.6) -- (.75,2.3);
  \draw (1.15,2.6) -- (.75,2.3);
  \draw (.35,1.55) -- (.75,1.85);
  \draw (1.15,1.55) -- (.75,1.85);  
  \draw [dashed,line width=1.7pt] (.105,0.95) -- (1.45,0.95);
\end{tikzpicture}
}

\put(.095,.365){\line(1,0){.27}}
\put(.095,.24){\line(1,0){.27}}
\put(.095,.24){\line(0,1){.125}}
\put(.365,.24){\line(0,1){.125}}

\put(.09,.077){\circlearound{1}}
\put(.166,.104){\circlearound{2}}
\put(.242,.13){\circlearound{3}}
\put(.32,.151){\circlearound{4}}
\put(.395,.173){\circlearound{5}}
\put(.472,.195){\circlearound{6}}
\put(.55,.211){\circlearound{7}}
\put(.625,.231){\circlearound{8}}
\put(.7,.245){\circlearound{9}}
\put(.776,.26){\tikz[baseline]\node[draw,shape=circle,scale=0.5,anchor=base]{10};}
\end{picture}
\setlength{\unitlength}{.8\linewidth}
\begin{picture}(.96,.47)
\put(.01,.26){\includegraphics[width=0.18\unitlength]{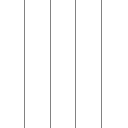}} \put(.01,.43){\circlearound{1}} \put(.06,.235){\small $\varepsilon=0.02$}
\put(.044,.44){\circle*{0.008}}
\put(.081,.44){\circle*{0.008}}
\put(.116,.44){\circle*{0.008}}
\put(.153,.44){\circle*{0.008}}
\put(.044,.26){\circle*{0.008}}
\put(.081,.26){\circle*{0.008}}
\put(.116,.26){\circle*{0.008}}
\put(.153,.26){\circle*{0.008}}
\put(.2,.26){\includegraphics[width=0.18\unitlength]{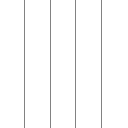}} \put(.2,.43){\circlearound{2}} \put(.25,.235){\small $\varepsilon=0.07$}
\put(.234,.44){\circle*{0.008}}
\put(.271,.44){\circle*{0.008}}
\put(.306,.44){\circle*{0.008}}
\put(.343,.44){\circle*{0.008}}
\put(.234,.26){\circle*{0.008}}
\put(.271,.26){\circle*{0.008}}
\put(.306,.26){\circle*{0.008}}
\put(.343,.26){\circle*{0.008}}
\put(.39,.26){\includegraphics[width=0.18\unitlength]{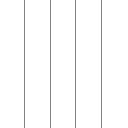}} \put(.39,.43){\circlearound{3}} \put(.44,.235){\small $\varepsilon=0.12$}
\put(.424,.44){\circle*{0.008}}
\put(.461,.44){\circle*{0.008}}
\put(.496,.44){\circle*{0.008}}
\put(.533,.44){\circle*{0.008}}
\put(.424,.26){\circle*{0.008}}
\put(.461,.26){\circle*{0.008}}
\put(.496,.26){\circle*{0.008}}
\put(.533,.26){\circle*{0.008}}
\put(.58,.26){\includegraphics[width=0.18\unitlength]{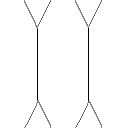}} \put(.58,.43){\circlearound{4}} \put(.63,.235){\small $\varepsilon=0.17$}
\put(.614,.44){\circle*{0.008}}
\put(.651,.44){\circle*{0.008}}
\put(.686,.44){\circle*{0.008}}
\put(.723,.44){\circle*{0.008}}
\put(.614,.26){\circle*{0.008}}
\put(.651,.26){\circle*{0.008}}
\put(.686,.26){\circle*{0.008}}
\put(.723,.26){\circle*{0.008}}
\put(.77,.26){\includegraphics[width=0.18\unitlength]{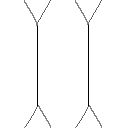}} \put(.77,.43){\circlearound{5}} \put(.82,.235){\small $\varepsilon=0.22$}
\put(.804,.44){\circle*{0.008}}
\put(.841,.44){\circle*{0.008}}
\put(.876,.44){\circle*{0.008}}
\put(.913,.44){\circle*{0.008}}
\put(.804,.26){\circle*{0.008}}
\put(.841,.26){\circle*{0.008}}
\put(.876,.26){\circle*{0.008}}
\put(.913,.26){\circle*{0.008}}
\put(.01,.03){\includegraphics[width=0.18\unitlength]{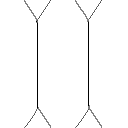}} \put(.01,.2){\circlearound{6}} \put(.06,0.005){\small $\varepsilon=0.27$}
\put(.044,.21){\circle*{0.008}}
\put(.081,.21){\circle*{0.008}}
\put(.116,.21){\circle*{0.008}}
\put(.153,.21){\circle*{0.008}}
\put(.044,.03){\circle*{0.008}}
\put(.081,.03){\circle*{0.008}}
\put(.116,.03){\circle*{0.008}}
\put(.153,.03){\circle*{0.008}}
\put(.2,.03){\includegraphics[width=0.18\unitlength]{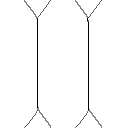}} \put(.2,.2){\circlearound{7}} \put(.25,0.005){\small $\varepsilon=0.32$}
\put(.234,.21){\circle*{0.008}}
\put(.271,.21){\circle*{0.008}}
\put(.306,.21){\circle*{0.008}}
\put(.343,.21){\circle*{0.008}}
\put(.234,.03){\circle*{0.008}}
\put(.271,.03){\circle*{0.008}}
\put(.306,.03){\circle*{0.008}}
\put(.343,.03){\circle*{0.008}}
\put(.39,.03){\includegraphics[width=0.18\unitlength]{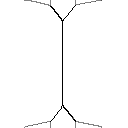}} \put(.39,.2){\circlearound{8}} \put(.44,0.005){\small $\varepsilon=0.37$}
\put(.424,.21){\circle*{0.008}}
\put(.461,.21){\circle*{0.008}}
\put(.496,.21){\circle*{0.008}}
\put(.533,.21){\circle*{0.008}}
\put(.424,.03){\circle*{0.008}}
\put(.461,.03){\circle*{0.008}}
\put(.496,.03){\circle*{0.008}}
\put(.533,.03){\circle*{0.008}}
\put(.58,.03){\includegraphics[width=0.18\unitlength]{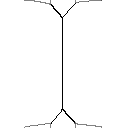}} \put(.58,.2){\circlearound{9}} \put(.63,0.005){\small $\varepsilon=0.42$}
\put(.614,.21){\circle*{0.008}} 
\put(.651,.21){\circle*{0.008}}
\put(.686,.21){\circle*{0.008}}
\put(.723,.21){\circle*{0.008}}
\put(.614,.03){\circle*{0.008}}
\put(.651,.03){\circle*{0.008}}
\put(.686,.03){\circle*{0.008}}
\put(.723,.03){\circle*{0.008}}
\put(.77,.03){\includegraphics[width=0.18\unitlength]{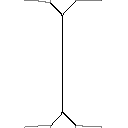}} \put(.77,.2){\tikz[baseline]\node[draw,shape=circle,scale=0.5,anchor=base]{10};} \put(.82,0.005){\small $\varepsilon=0.47$}
\put(.804,.21){\circle*{0.008}}
\put(.841,.21){\circle*{0.008}}
\put(.876,.21){\circle*{0.008}}
\put(.913,.21){\circle*{0.008}}
\put(.804,.03){\circle*{0.008}}
\put(.841,.03){\circle*{0.008}}
\put(.876,.03){\circle*{0.008}}
\put(.913,.03){\circle*{0.008}}
\end{picture}
\caption{Parameter study for branched transport.
Top: Plot of the manually computed minimal energy for different values of $\varepsilon$. The line type indicates the optimal network topology.
Bottom: Numerically computed optimal fluxes for evenly spaced values of $\varepsilon$ in the same range. The numerically obtained network topologies match the predicted ones except for example \textcircled{\small 3}.}
\label{fig:EnergyPlotBT}
\end{figure}

Figures\,\ref{fig:EnergyPlotUP} and \ref{fig:EnergyPlotBT} show numerically optimized urban planning and branched transportation networks or rather fluxes
between four evenly-spaced point sources at the top of a rectangular domain and four evenly spaced point sinks at the bottom of the domain.
For such a simple geometric setting one can still calculate the true optimal solution manually by enumerating all possible network topologies and optimizing their vertex positions by hand.
In both figures, the top graph shows the manually computed optimal energy and corresponding network topology for different parameter values.
Below, numerical solutions are shown that uniformly sample the same parameter range.
Except for example \textcircled{\small 9} in Figure \ref{fig:EnergyPlotUP} and example \textcircled{\small 3} in Figure\,\ref{fig:EnergyPlotBT}, the numerical solutions coincide with the predicted, truly optimal network topology.
The discrepancy between the numerical and the true solution in examples \textcircled{\small 9} and \textcircled{\small 3} (which both lie close to a bifurcation point) may be due to our discretization grid,
which is vertically aligned and thus slightly favours vertical structures, since at fixed grid resolution diagonal structures cannot be represented as exactly.
Note that unlike for other numerical approaches, the shown result cannot just be a suboptimal local optimum, since the numerical method always leads to the global optimum due to convexity.

% Example that does not work
\begin{figure}
\setlength{\unitlength}{\linewidth}
\begin{picture}(1,.17)
\put(0,.02){\includegraphics[width=0.18\textwidth]{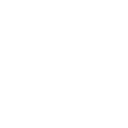}} \put(.065,0){\small $s=0$}
\put(0,.02){\line(0,1){.18}}
\put(0,.02){\line(1,0){.18}}
\put(0,.2){\line(1,0){.18}}
\put(.18,.02){\line(0,1){.18}}
\put(.19,.02){\includegraphics[width=0.18\textwidth]{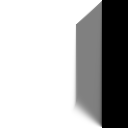}} \put(.25,0){\small $s=\frac{1}{4}$}
\put(.19,.02){\line(0,1){.18}}
\put(.19,.02){\line(1,0){.18}}
\put(.19,.2){\line(1,0){.18}}
\put(.37,.02){\line(0,1){.18}}
\put(.38,.02){\includegraphics[width=0.18\textwidth]{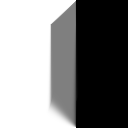}} \put(.44,0){\small $s=\frac{1}{2}$}
\put(.38,.02){\line(0,1){.18}}
\put(.38,.02){\line(1,0){.18}}
\put(.38,.2){\line(1,0){.18}}
\put(.56,.02){\line(0,1){.18}}
\put(.57,.02){\includegraphics[width=0.18\textwidth]{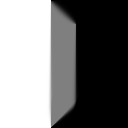}} \put(.63,0){\small $s=\frac{3}{4}$}
\put(.57,.02){\line(0,1){.18}}
\put(.57,.02){\line(1,0){.18}}
\put(.57,.2){\line(1,0){.18}}
\put(.75,.02){\line(0,1){.18}}
\put(.76,.02){\includegraphics[width=0.18\textwidth]{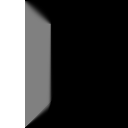}} \put(.825,0){\small $s=1$}
\put(.76,.02){\line(0,1){.18}}
\put(.76,.02){\line(1,0){.18}}
\put(.76,.2){\line(1,0){.18}}
\put(.94,.02){\line(0,1){.18}}
\put(.945,.021){
\begin{tikzpicture}
  %\draw [black] (0,0) rectangle (.35,2.8);
  \node [shading = axis,rectangle, left color=white, right color=black,shading angle=0, anchor=north, minimum width=.4, minimum height=80] (box) at (current page.north){};
%    \draw [fill=black,black] (.1,.1) rectangle (.4,1);
%    \draw [fill=gray] (.1,1) rectangle (.4,2);
%    \draw [fill=white] (.1,2) rectangle (.4,2.94);
\end{tikzpicture}
}
\put(.98,.01){$0$}
\put(.975,.1){$0.5$}
\put(.98,.19){$1$}
\end{picture}
\caption{Example of a numerical optimization for urban planning, resulting in a non-binary solution $v$ (the images show different cross-sections).
This indicates a lack of tightness of the convex reformulation for the chosen parameters ($a=2.13$ and $\varepsilon=0.5$).
The parameters lie close to a bifurcation at which the truly optimal network topology changes from four vertical pipes to a single tree.}
\label{fig:NonWorkingExample}
\end{figure}

If the proposed convex reformulation of urban planning and branched transport were always tight,
the optimal solution $v$ to \eqref{eq:convex_formulation} would be binary and only take values $0$ or $1$.
However, in some situations this is not the case.
For the case of urban planning, Figure\,\ref{fig:NonWorkingExample} shows a numerically optimized function $v$ which distinctly takes on three values, $0$, $\frac12$, and $1$.
In our numerical experience, this may sometimes happen close to a bifurcation point, where the optimal network topology changes.
Indeed, the parameters of Figure\,\ref{fig:NonWorkingExample} lie almost exactly at a bifurcation where the truly optimal solution changes from four vertical pipes to a single tree.

% Urban planning and branched transport 16-16 quadratic
\begin{figure}
\setlength{\unitlength}{\linewidth}
\begin{picture}(1.2,.52)
\put(0.01,.4){\small Urban} \put(.005,.38){\small Planning}
\put(.1,.29){\includegraphics[width=0.222\textwidth]{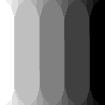}}
\put(.18,.27){\small $\varepsilon=0.015$} 
\put(.33,.29){\includegraphics[width=0.222\textwidth]{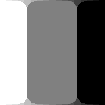}}
\put(.41,.27){\small $\varepsilon=0.2$} 
\put(.56,.29){\includegraphics[width=0.222\textwidth]{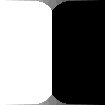}}
\put(.64,.27){\small $\varepsilon=0.8$} 
\put(.79,.29){\includegraphics[width=0.222\textwidth]{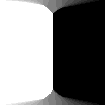}}
\put(.87,.27){\small $\varepsilon=1.5$} 
\put(.1,.29){\line(0,1){.222}}
\put(.1,.29){\line(1,0){.222}}
\put(.1,.512){\line(1,0){.222}}
\put(.33,.29){\line(0,1){.222}}
\put(.33,.29){\line(1,0){.222}}
\put(.33,.512){\line(1,0){.222}}
\put(.56,.29){\line(0,1){.222}}
\put(.56,.29){\line(1,0){.222}}
\put(.56,.512){\line(1,0){.222}}
\put(.79,.29){\line(0,1){.222}}
\put(.79,.29){\line(1,0){.222}}
\put(.79,.512){\line(1,0){.222}}
\put(-.006,.12){\small Branched} \put(-.007,.1){\small Transport}
\put(.1,.02){\includegraphics[width=0.222\textwidth]{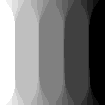}}
\put(.18,0){\small $\varepsilon=0.07$}  
\put(.33,.02){\includegraphics[width=0.222\textwidth]{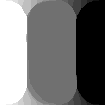}}
\put(.41,0){\small $\varepsilon=0.2$} 
\put(.56,.02){\includegraphics[width=0.222\textwidth]{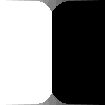}}
\put(.64,0){\small $\varepsilon=0.4$} 
\put(.79,.02){\includegraphics[width=0.222\textwidth]{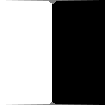}}
\put(.87,0){\small $\varepsilon=0.8$} 
\put(.1,.02){\line(0,1){.222}}
\put(.1,.02){\line(1,0){.222}}
\put(.1,.242){\line(1,0){.222}}
\put(.33,.02){\line(0,1){.222}}
\put(.33,02){\line(1,0){.222}}
\put(.33,.242){\line(1,0){.222}}
\put(.56,.02){\line(0,1){.222}}
\put(.56,.02){\line(1,0){.222}}
\put(.56,.242){\line(1,0){.222}}
\put(.79,.02){\line(0,1){.222}}
\put(.79,.02){\line(1,0){.222}}
\put(.79,.242){\line(1,0){.222}}
\put(.09,.01){\rule{2pt}{.5\unitlength}}
\put(-.005,.255){\rule{1.03\unitlength}{2pt}}
\end{picture}
\caption{Numerical optimization results for transport from 16 more or less evenly spaced point sources of same mass at the top to 16 evenly spaced point sinks (of same mass as well) at the bottom ($a=5$ in case of urban planning).
Instead of the optimal flux we show the corresponding optimal image $u$.}
\label{fig:Results16To16}
\end{figure}

Figure\,\ref{fig:Results16To16} shows numerical results obtained on a rectangular domain with 16 evenly spaced point sources of same mass at the top and as many point sinks (of same mass as well) at the bottom
(approximating a continuous line measure discretely).
Instead of the optimal flux we here show the corresponding optimal image $u$, whose jump set represents the network.
Depending on the chosen parameters, mass is preferentially collected in one or more tree-like networks before being separated again.
With decreasing parameter $\varepsilon$ (which in a way encodes the efficiency of transporting mass in bulk) the number of trees increases.

% Urban planning and branched transport 3-3 circle
\begin{figure}
\setlength{\unitlength}{\linewidth}
\begin{picture}(1.2,.35)
\put(.085,.195){\includegraphics[width=0.145\textwidth]{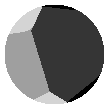}}
\put(.125,.174){\small $\varepsilon=0.4$} 
\put(.135,.336){$+$} \put(.077,.29){$-$} \put(.19,.325){$-$} \put(.17,.19){$-$} \put(.114,.194){$+$} \put(.216,.227){$+$}
\put(.245,.195){\includegraphics[width=0.145\textwidth]{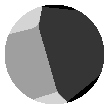}}
\put(.285,.174){\small $\varepsilon=1$} 
\put(.295,.336){$+$} \put(.237,.29){$-$} \put(.35,.325){$-$} \put(.33,.19){$-$} \put(.274,.194){$+$} \put(.376,.227){$+$}
\put(.405,.195){\includegraphics[width=0.145\textwidth]{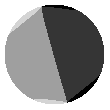}}
\put(.445,.174){\small $\varepsilon=2$} 
\put(.455,.336){$+$} \put(.397,.29){$-$} \put(.51,.325){$-$} \put(.49,.19){$-$} \put(.434,.194){$+$} \put(.536,.227){$+$}
\put(.565,.195){\includegraphics[width=0.145\textwidth]{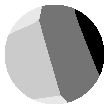}}
\put(.605,.174){\small $\varepsilon=0.2$} 
\put(.615,.336){$+$} \put(.557,.29){$-$} \put(.67,.325){\small \textbf{$+$}} \put(.65,.19){$-$} \put(.594,.194){$+$} \put(.696,.227){--}
\put(.725,.195){\includegraphics[width=0.145\textwidth]{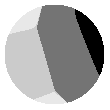}}
\put(.765,.174){\small $\varepsilon=0.7$} 
\put(.775,.336){$+$} \put(.717,.29){$-$} \put(.83,.325){\small \textbf{$+$}} \put(.81,.19){$-$} \put(.754,.194){$+$} \put(.856,.227){--}
\put(.885,.195){\includegraphics[width=0.145\textwidth]{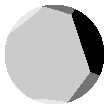}} 
\put(.925,.174){\small $\varepsilon=1.5$} 
\put(.935,.336){$+$} \put(.877,.29){$-$} \put(.99,.325){\small \textbf{$+$}} \put(.97,.19){$-$} \put(.914,.194){$+$} \put(1.016,.227){--}
\put(-.017,.275){\small Urban} \put(-.017,.255){\small Planning}
\put(-.012,.165){\rule{1.03\unitlength}{2pt}}
\put(.085,.005){\includegraphics[width=0.145\textwidth]{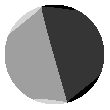}}
\put(.125,-.016){\small $\varepsilon=0.05$} 
\put(.135,.146){$+$} \put(.077,.1){$-$} \put(.19,.135){$-$} \put(.17,.0){$-$} \put(.114,.004){$+$} \put(.216,.037){$+$}
\put(.245,.005){\includegraphics[width=0.145\textwidth]{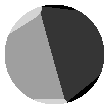}}
\put(.285,-.016){\small $\varepsilon=0.3$} 
\put(.295,.146){$+$} \put(.237,.1){$-$} \put(.35,.135){$-$} \put(.33,.0){$-$} \put(.274,.004){$+$} \put(.376,.037){$+$}
\put(.405,.005){\includegraphics[width=0.145\textwidth]{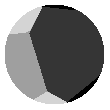}}
\put(.445,-.016){\small $\varepsilon=0.7$} 
\put(.455,.146){$+$} \put(.397,.1){$-$} \put(.51,.135){$-$} \put(.49,.0){$-$} \put(.434,.004){$+$} \put(.536,.037){$+$}
\put(.565,.005){\includegraphics[width=0.145\textwidth]{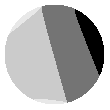}}
\put(.605,-.016){\small $\varepsilon=0.05$} 
\put(.615,.146){$+$} \put(.557,.1){$-$} \put(.67,.135){\small \textbf{$+$}} \put(.65,.0){$-$} \put(.594,.004){$+$} \put(.696,.037){--}
\put(.725,.005){\includegraphics[width=0.145\textwidth]{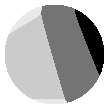}}
\put(.765,-.016){\small $\varepsilon=0.3$} 
\put(.775,.146){$+$} \put(.717,.1){$-$} \put(.83,.135){\small \textbf{$+$}} \put(.81,.0){$-$} \put(.754,.004){$+$} \put(.856,.037){--}
\put(.885,.005){\includegraphics[width=0.145\textwidth]{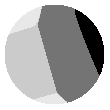}}
\put(.925,-.016){\small $\varepsilon=0.5$} 
\put(.935,.146){$+$} \put(.877,.1){$-$} \put(.99,.135){\small \textbf{$+$}} \put(.97,.0){$-$} \put(.914,.004){$+$} \put(1.016,.037){--}
\put(-.017,.085){\small Branched} \put(-.017,.065){\small Transport}
\put(.07,-.016){\rule{2pt}{.35\unitlength}}
\put(.553,-.016){\rule{2pt}{.35\unitlength}}
\end{picture}
\caption{Numerical optimization results for urban planning and branched transport with different parameters ($a=5$ in case of urban planning).
In the left column, the prescribed masses are $+\frac12,-\frac18,+\frac18,-\frac12,+\frac18,-\frac18$ (counterclockwise from top),
in the right column $+\frac12,-\frac18,+\frac18,-\frac12,-\frac12,+\frac12$.}
\label{fig:ResultsCircle1}
\end{figure}

Figure\,\ref{fig:ResultsCircle1} shows computational examples in which the underlying domain is non-rectangular and in which sources and sinks of different weights alternate on the domain boundary.
A more complex case is displayed in Figure\,\ref{fig:ResultsCircle2}, where we simulated 
the transport from a point source at the centre of the circular domain to 32 point sinks of equal mass on the boundary (approximating a uniform measure).
Such a geometry can be achieved by a trick using a periodic covering of the disk.
In detail, we chose the domain $\Omega=B_1(0)\setminus\{0\}$ and prescribed the sought image $u$ outside $\Omega$ taking values in $S^1$ rather than $\R$
(in between each two point sinks the image is prescribed to equal the midpoint between both sink positions on $S^1$).
Now $\R$ is interpreted as a covering of $S^1$ via the mapping $s\mapsto(\cos s,\sin s)$ so that the calculations can be performed as before
(similarly to the approach in \cite{CrSt13}).

% Urban planning and branched transport continuous circle
\begin{figure}
\setlength{\unitlength}{\linewidth}
\begin{picture}(1.2,.47)
\put(0.03,.35){\small Urban} \put(.024,.33){\small Planning}
\put(.13,.25){\includegraphics[width=0.205\textwidth]{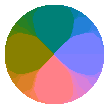}}
\put(.2,.235){\small $\varepsilon=0.05$} 
\put(.34,.25){\includegraphics[width=0.205\textwidth]{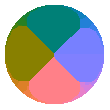}}
\put(.41,.235){\small $\varepsilon=0.2$} 
\put(.55,.25){\includegraphics[width=0.205\textwidth]{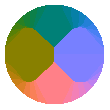}}
\put(.62,.235){\small $\varepsilon=0.4$} 
\put(.76,.25){\includegraphics[width=0.205\textwidth]{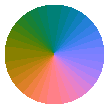}}
\put(.83,.235){\small $\varepsilon=1.5$} 
\put(.024,.12){\small Branched} \put(.022,.1){\small Transport}
\put(.13,.02){\includegraphics[width=0.205\textwidth]{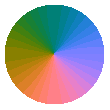}}
\put(.2,.005){\small $\varepsilon=0.01$}  
\put(.34,.02){\includegraphics[width=0.205\textwidth]{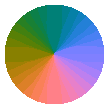}}
\put(.41,.005){\small $\varepsilon=0.05$} 
\put(.55,.02){\includegraphics[width=0.205\textwidth]{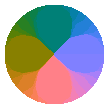}}
\put(.62,.005){\small $\varepsilon=0.25$} 
\put(.76,.02){\includegraphics[width=0.205\textwidth]{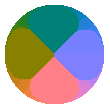}}
\put(.83,.005){\small $\varepsilon=0.4$} 
\put(.12,.005){\rule{2pt}{.44\unitlength}}
\put(.01,.225){\rule{.95\unitlength}{2pt}}
\end{picture}
\caption{Numerical optimization results for urban planning and branched transport for a single point source at the centre of the circular domain to 32 point sinks on the boundary ($a=5$ in case of urban planning).
The discontinuity set of the image corresponds to the optimal network.
For this geometry, the image $u$ takes values in $S^1$, which is here indicated by the periodic colour scale.}
\label{fig:ResultsCircle2}
\end{figure}

\section{Discussion}\label{sec:discussion}
The key idea to arrive at the convexification of network optimization in this work was the identification of network optimization with particular image inpainting problems, for which convexifications are known.

Let us stress that, even though we considered a simple rectangular or circular problem domain $\Omega$, the same approach can easily be applied for more general convex domains.

However, the formulation as image inpainting imposes two major restrictions on the network optimization problem.
First of all, such a formulation is only possible in two spatial dimensions because otherwise there cannot be any interpretation of rotated mass fluxes as image gradients.
Secondly, since image gradients are always curl-free, the corresponding mass fluxes must be divergence-free in the domain interior.
As a consequence, the source and sink distribution $\mu_+$ and $\mu_-$ must lie on the boundary of the domain.
Deviations from this situation may be possible (see Figure\,\ref{fig:ResultsCircle2}), but require special tricks.

The convexification derived in this work has only been shown to represent a lower bound to the original network optimization.
It still remains an open question when it is actually equivalent.
This question naturally decomposes into two separate issues:
First one has to investigate whether the network optimization problems and their Mumford--Shah versions are equivalent.
As explained in Section\,\ref{sec:inpainting}, this leads to analysing the regularity of the singular set of minimizers of Mumford--Shah-type functionals,
which is notoriously difficult and for which results are so far only partial.
For instance, the Mumford--Shah conjecture for the standard Mumford--Shah functional has been proved in two dimensions assuming that the singular set is connected by Bonnet in \cite{Bon96}.
The second issue concerns the tightness of the convexification via functional lifting.
For some lifted functionals there exist thresholding results showing that the solution of the original problem can be recovered from the convexified problem via thresholding (see for instance \cite{ChEsNi06}),
however, our numerical experiments indicate that this may sometimes not be the case.
Nevertheless, our proposed convexification turned out to be tight enough to derive the sharp lower bound in the energy scaling law for complicated networks,
and also in most numerical simulations it yielded the truly optimal network geometry.

\section{Acknowledgements}
This work was supported by the Deutsche Forschungsgemeinschaft (DFG), Cells-in-Motion Cluster of Excellence (EXC 1003-CiM), University of M\"unster, Germany.
B.W.'s research was supported by the Alfried Krupp Prize for Young University Teachers awarded by the Alfried Krupp von Bohlen und Halbach-Stiftung.

\bibliographystyle{alpha}
\bibliography{BrWi14}

\end{document}